\newcounter{ass}
\def\O{\Omega}
\def\s{\sigma}
\def\d{\delta} 
\def\hat{\widehat}
\def\disp{\displaystyle}
\def\e{\eta}
\def\E{\mathbb{E}}
\def\Prb{\mathbb{P}}\def\P{\mathbb{P}}
\def\tl{\widetilde}
\def\R{\mathbb{R}}
\newtheorem{theorem}{Theorem}[section]
\newtheorem{definition}[theorem]{Definition}
\newtheorem{proposition}[theorem]{Proposition}
\newtheorem{lemma}[theorem]{Lemma}
\newtheorem{corollary}[theorem]{Corollary}
\newtheorem{remark}[theorem]{Remark}
\numberwithin{equation}{section}
\begin{document}
\title{\textbf{Averaging Principle for Backward Stochastic Differential Equations driven by both standard and fractional Brownian motions }}
 \author{\centering  $\textrm{Ibrahima Faye}^{1}$,  $\textrm{Sadibou Aidara}^{2}$, $\textrm{Yaya Sagna}^{3}$ }
 
\date{ }
\maketitle
\begin{center}
$	^{1}$ Universit\'e Alioune Diop de Bambey, BP. 30 Bambey\\
ibou.faye@uadb.edu.sn\\
$	^{2} \& ^{3}$ LERSTAD, UFR Sciences Appliqu\'ees et de Technologie,\\
Universit\'e Gaston Berger, BP 234, Saint-Louis, SENEGAL\\
sadibou.aidara.ugb@gmail.com \& sagnayaya88@gmail.com\\
\end{center}
\begin{abstract}
\footnotesize{\noindent Stochastic averaging for a class of backward stochastic differential equations driven by both standard and fractional Brownian motions (SFrBSDEs in short), is investigated. An averaged SFrBSDEs for the original SFrBSDEs is proposed, and their solutions are quantitatively compared. Under some appropriate assumptions, the solutions to original systems can be approximated by the solutions to averaged stochastic systems in the sense of mean square and also in probability.}
	\end{abstract}	
\textbf{Mathematic Subjection Classification:} 60H05, 60G44
\medspace\\
\textbf{keywords}: Averaging principle, backward stochastic differential equation, Stochastic calculus, fractional Brownian motion, Chebyshev's inequality  and It\^o's representation formula.

\section{Introduction}

Backward stochastic differential equations (BSDEs in short) were first introduced by Pardoux and Peng \cite{Par-Peng90} with Lipschitz assumption under which they proved the celebrated existence and uniqueness result. This pioneer work was extensively used in many fields like stochastic interpretation of solutions of PDEs and financial mathematics. Few years later, several authors investigated  BSDEs with respect to fractional Brownian motion $\left(B^H_t\right)_{t\geq 0}$ with Hurst parameter $H$. This process is a self-similar, i.e. $B^H_{at}$ has the same law as $a^HB^H_t$ for any $a>0$, it has a long range dependence for $H>\frac{1}{2}$. For $H=\frac{1}{2}$ we obtain a standard Wiener process, but for $H\neq\frac{1}{2}$, this process is not a semimartingale. These properties make this process a useful driving noise in models arising in physics, telecommunication networks, finance and other fields.

Bender \cite{Ben} gaves  one  of the  earliest  result  on  fractional BSDEs (FrBSDEs in short). The author established an explicit solution of a class of linear FrBSDEs with arbitrary  Hurst parameter $H$. This is done essentially by means  of solution of a specific  linear parabolic PDE.  
There are two major obstacles depending on the properties of fractional Brownian motion: Firstly, the fractional Brownian motion is not a semimartingale except for the case of  Brownian motion ($H=\frac{1}{2}$), hence the classical It\^o calculus based on semimartingales cannot be transposed directly to the fractional case. Secondly, there is no martingale representation theorem with respect to the fractional Brownian motion. Studing nonlinear fractional BSDEs, Hu and Peng \cite{Hu-Peng} overcame successfully the second obstacle in  the case $H>\frac{1}{2}$ by means of the quasi-conditional expectation. 
The authors prove existence and uniqueness of the solution but with some restrictive assumptions on  the  generator. In  this same spirit, Maticiuc and Nie \cite{Mat-Nie}  interesting in  backward stochastic variational inequalities, improved this first  result by weakening the required condition  on  the drift of  the stochastic equation. Fei et al \cite{Fei} introduced the following type of  BSDEs  driven by both standard and fractional Brownian motions (SFrBSDEs in short)
\begin{equation} \label{SFrBSDEs}  
Y_t = \xi + \!\int_t^Tf(s, \e_s, Y_s, Z_{1,s},Z_{2,s})ds -\!\int_t^TZ_{1,s}dB_s - \!\int_t^TZ_{2,s}dB_s^H, \quad 0\leq t\leq T,
\end{equation}
where $\left(B_t\right)_{t\geq 0}$ is a standard Brownian  motion, $\left(B^H_t\right)_{t\geq 0}$ is a  fractional  Brownian  motion and $\left\{\e_t\right\}_{0\le t\le T}$ is a solution of a stochastic differential equation driven by both standard and fractional Brownian motions. In \cite{Fei}, the authors abtained the existence and uniqueness of the solution of SFrBSDEs under Lipschitz assumptions. Recently, new classes of BSDEs driven by two mutually independent fractional Brownian motions were introduced by Aidara and Sagna \cite{SA19}. They established the existence and uniqueness of solutions.

Stochastic averaging principle, which is usually used to approximate dynamical systems under random fluctuations, has long and rich history in multiscale problems (see, e.g.,\cite{A78}). Recently, the averaging principle for BSDEs and one-barrier reflected BSDEs, with Lipschitz coefficients,  were first studied  by Jing and Li \cite{JL21}.  In the present paper, we study a stochastic averaging technique for a class of the SFrBSDEs \eqref{SFrBSDEs}.  We present an averaging principle, and prove that the original  SFrBSDEs can be approximated by an averaged SFrBSDEs in the sense of mean square convergence and convergence in probability, when a scaling parameter tends to zero.

The rest of the paper is arranged as follows. In Section 2, we recall some definitions and results about fractional stochastic integrals and the related It\^o formula. In Section 3, we investigate the averaging principle for the SFrBSDEs under some proper conditions.

\section{Fractional Stochastic calculus}
Let $\O$ be a non-empty set,  $\mathcal{F}$	a $\sigma-$algebra of sets $\O$, $\P$ a probability measure defined on $\mathcal{F}$ and  $\left\lbrace \mathcal{F}_t,\; t\in[0,T]\right\rbrace$ a $\sigma-$algebra generated by both standard and fractional Brownian motions.
The triplet $(\O, \mathcal{F}, \P)$ defines  a probability space and $\E$  the mathematical expectation with respect to the  probability measure $\P$.

The fractional  Brownian  motion $\left(B^H_t\right)_{t\geq 0}$  with Hurst parameter $H\in(0,1)$ is a zero mean Gaussian process with the covariance function
$$\E[B_t^HB_s^H]=\frac{1}{2}\left(t^{2H} + s^{2H} -|t-s|^{2H}\right), \quad t,s\geq0.$$
Suppose that the process $\left(B^H_t\right)_{t\geq 0}$ is independent of the standard Brownian motion $\left(B_t\right)_{t\geq 0}$.
Throughout this paper it is assumed that $H\in (1/2, 1)$ is arbitrary but fixed.

Denote $\rho(t,s) =  H(2H-1)|t-s|^{2H-2}, \; (t,s)\in\R^2.$
Let $\xi$ and $\eta$ be measurable functions on  $[0,T]$. Define
$$\langle \xi,\eta\rangle_t =\int_0^t\int_0^t\rho(u,v)\xi(u)\eta(v)dudv \;\;\text{and}\;\; \left\|\xi\right\|_t^2 = \langle \xi ,\xi \rangle_t.$$
Note that, for any $t\in[0,T]$, $\langle \xi ,\eta \rangle_t$ is a Hilbert scalar product. Let $\mathcal{H}$ be the completion of the set of continuous functions under this Hilbert norm $\left\|\cdot\right\|_t$ and  $(\xi_n)_n$ be a sequence in $\mathcal{H}$ such that 
$\left\langle \xi_i,\xi_j \right\rangle_T =\delta_{ij}$.
Let $\mathscr{P}_T^H$ be the set of all polynomials of fractional Brownian motion. Namely,  $\mathscr{P}_T^H$  contains all elements  of the form
$$F(\omega)=f\left(\int_0^T\xi_1(t)dB_t^H, \int_0^T\xi_2(t)dB_t^H,\dots, \int_0^T\xi_n(t)dB_t^H \right)$$
where $f$ is a  polynomial function of $n$ variables. The Malliavin derivative  $D_t^H$ of  $F$ is given by 
$$D_s^HF=\sum_{i=1}^n\frac{\partial f}{\partial x_i}\left(\int_0^T\xi_1(t)dB_t^H, \int_0^T\xi_2(t)dB_t^H,\dots, \int_0^T\xi_n(t)dB_t^H \right)\xi_i(s) \quad 0\leq s\leq T.$$ 

Similarly, we can define the Malliavin derivative  $D_tG$ of the Brownian functional 
$$G(\omega)=f\left(\int_0^T\xi_1(t)dB_t, \int_0^T\xi_2(t)dB_t,\dots, \int_0^T\xi_n(t)dB_t \right).$$

The  divergence  operator  $D^{H}$ is closable from $L^{2}(\O,  {\cal F}, \P)$ to  $L^{2}(\O,  {\cal F},\P, {\cal H})$. 
Hence  we   can  consider the space  $ \mathbb{D}_{1, 2}$  is  the completion  of   ${\mathscr P}_{T}^H$  with  the  norm   
$$  || F ||_{1,2}^2 =  \E|F|^{2}  +   \E|| D_{s}^{H} F ||_{T}^2.$$
Now we introduce the Malliavin $\rho$-derivative $\mathbb{D}_t^H$ of  $F$ by
$$\mathbb{D}_t^HF=\int_0^T\rho(t,s)D_s^HFds$$

and  denote  by $\mathbb{L}^{1,2}_H$ the space of all stochastic processes
 $F:\left(\Omega, \mathcal{F}, \Prb\right)\longrightarrow\mathscr{H}$ such that 
$$\E\left(\left\|F\right\|_T^2 +\int_0^T\int_0^T |\mathbb{D}_s^H F_t|^2 ds dt\right) <+\infty.$$
We have the following (see[\cite{Hu}, Proposition 6.25]): 
\begin{theorem}\label{theo-1}
Let $F:\left(\Omega, \mathcal{F}, \P\right)\longrightarrow\mathcal{H}$ be a stochastic processes such that 
$$\E\left(\left\|F\right\|_T^2 +\int_0^T\int_0^T |\mathbb{D}_s^H F_t|^2 ds dt\right) <+\infty.$$
Then, the It\^o-Skorohod type stochastic integral denoted by  $\int_0^TF_sdB_s^H$ exists in  $L^2\left(\Omega, \mathcal{F}, \P\right)$ and satisfies
$$\E\left( \int_0^TF_sdB_s^H\right)=0    \quad\text{and}\quad 
\E \left( \int_0^TF_sdB_s^H\right)^2= \E\left(\Vert F\Vert_T^2 + \int_0^T\int_0^T\mathbb{D}_s^H F_t\mathbb{D}_t^H F_sdsdt \right).$$
\end{theorem}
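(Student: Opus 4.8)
The plan is to construct the integral by the standard duality-and-closure procedure for Skorohod integrals, carried out in the $\rho$-weighted Hilbert space $\mathcal{H}$. First I would fix the class $\mathcal{S}$ of elementary processes $F_t=\sum_{j=1}^{m}G_j\mathbf{1}_{(t_j,t_{j+1}]}(t)$ with $G_j\in\mathscr{P}_T^H$ and $0\le t_1<\cdots<t_{m+1}\le T$, and for such $F$ define $\int_0^TF_sdB_s^H=\delta^H(F)$ by the explicit random variable coming from the rule $\int_0^T(Gh(s))dB_s^H=G\int_0^Th(s)dB_s^H-\langle D_\cdot^HG,h\rangle_{\mathcal{H}}$ for $G\in\mathscr{P}_T^H$ and $h\in\mathcal{H}$; equivalently, $\delta^H$ is the adjoint of $D^H$ tested against polynomial functionals. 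From the duality relation $\E[G_0\delta^H(F)]=\E[\langle F,D_\cdot^HG_0\rangle_{\mathcal{H}}]$ applied with $G_0\equiv 1$ and $D^H1=0$, one reads off $\E(\int_0^TF_sdB_s^H)=0$ for $F\in\mathcal{S}$.

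Next I would establish the second-moment identity on $\mathcal{S}$. Using the same duality with $G_0=\delta^H(F)$ together with the commutation rule $D_r^H\delta^H(F)=\langle F,\rho(r,\cdot)\rangle+\delta^H(D_r^HF)$ (the weighted analogue of the classical $D_r\delta(u)=u_r+\delta(D_ru)$), one obtains
\[
\E\left(\int_0^TF_sdB_s^H\right)^2=\E\left(\|F\|_T^2\right)+\E\left(\langle F,\delta^H(D_\cdot^HF)\rangle_{\mathcal{H}}\right),
\]
and applying duality once more to the last term, after inserting the weights $\rho$ that define $\mathbb{D}^H$, rewrites it as $\E\int_0^T\int_0^T\mathbb{D}_s^HF_t\,\mathbb{D}_t^HF_s\,ds\,dt$. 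Since, by Cauchy--Schwarz and the symmetry of the double integral,
\[
\int_0^T\int_0^T\mathbb{D}_s^HF_t\,\mathbb{D}_t^HF_s\,ds\,dt\le\int_0^T\int_0^T|\mathbb{D}_s^HF_t|^2\,ds\,dt,
\]
this yields $\E(\int_0^TF_sdB_s^H)^2\le\E(\|F\|_T^2+\int_0^T\int_0^T|\mathbb{D}_s^HF_t|^2\,ds\,dt)$, i.e. the map $F\mapsto\int_0^TF_sdB_s^H$ is continuous from $\mathcal{S}$, equipped with the $\mathbb{L}^{1,2}_H$-norm, into $L^2(\O,\mathcal{F},\P)$.

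Finally, $\mathcal{S}$ is dense in $\mathbb{L}^{1,2}_H$ for that norm, so a general $F$ satisfying the hypothesis is an $\mathbb{L}^{1,2}_H$-limit of elementary processes $F^n$; applying the continuity estimate to $F^n-F^m$ shows $(\int_0^TF^n_sdB_s^H)_n$ is Cauchy in $L^2$, and its limit is by definition $\int_0^TF_sdB_s^H$. Passing to the limit in the two identities — the bilinear form $F\mapsto\E\int_0^T\int_0^T\mathbb{D}_s^HF_t\,\mathbb{D}_t^HF_s\,ds\,dt$ being continuous on $\mathbb{L}^{1,2}_H$ by the same Cauchy--Schwarz bound — gives the claim. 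I expect the main obstacle to be this approximation step: one must verify that step processes with polynomial coefficients are dense in $\mathbb{L}^{1,2}_H$ in a way compatible with the $\rho$-weighted inner product and with the operator $\mathbb{D}^H$, and justify the commutation rule between $D^H$ and $\delta^H$ in the weighted setting; this is where closability of $D^H$ and the restriction $H\in(1/2,1)$ are genuinely used.
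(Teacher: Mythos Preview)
The paper does not prove this statement at all: it is quoted verbatim as Proposition~6.25 from Hu~\cite{Hu} and used as a black box. So there is no ``paper's own proof'' to compare against.

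That said, your sketch is the standard duality-and-closure argument one finds in the Malliavin calculus literature for the divergence with respect to $B^H$, and it is essentially how the cited reference proceeds. The structure --- define $\delta^H$ on smooth elementary processes as the adjoint of $D^H$, derive the mean-zero property from $D^H1=0$, obtain the isometry from the commutation $D^H_r\delta^H(F)=\langle F,\rho(r,\cdot)\rangle+\delta^H(D^H_rF)$, bound the trace term by Cauchy--Schwarz, then close --- is correct. One small point: you should be careful that the Cauchy--Schwarz step actually controls the \emph{difference} $\delta^H(F^n)-\delta^H(F^m)=\delta^H(F^n-F^m)$ in terms of the $\mathbb{L}^{1,2}_H$-norm of $F^n-F^m$, which it does, so the extension is legitimate. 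Your own caveat about density of $\mathcal{S}$ in $\mathbb{L}^{1,2}_H$ and closability of $D^H$ is the right place to point for the technical work; in Hu's monograph these are handled via the chaos decomposition rather than directly, which streamlines the commutation identity considerably.
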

Let us recall the fractional It\^o formula (see[\cite{Fei}, Theorem 3.1]). 
\begin{theorem} Let $\s_1\in L^2([0,T])$ and $\s_2\in\mathcal{H}$ be deterministic continuous functions.\\ Assume that $\left\|\s_2\right\|_t$ is continuously differentiable as a function of $t\in [0,T]$. Denote
$$X_t= X_0 + \int_0^t\alpha (s)ds + \int_0^t\s_1(s)dB_s + \int_0^t\s_2(s)dB_s^H,$$
where $X_0$ is a constant, $\alpha(t)$ is a deterministic function with $\int_0^t|\alpha(s)|ds<+\infty$. Let $F(t,x)$ be continuously differentiable  with respect to $t$ and twice continuously differentiable with respect to $x$. Then
\begin{align*}
 F(t,X_t)&= F(0,X_0) + \int_0^t\frac{\partial F}{\partial s}(s,X_s)ds +  \int_0^t\frac{\partial F}{\partial x}(s,X_s)dX_s \\
         &+\frac{1}{2}\int_0^t\frac{\partial^2 F}{\partial x^2}(s,X_s)\left[\s_1^2(s) + \frac{d}{ds}\left\|\s_2\right\|_s^2\right]ds,\quad 0\leq t\leq T.
\end{align*}
\end{theorem}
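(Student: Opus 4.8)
Since $\alpha$, $\s_1$ and $\s_2$ are deterministic, $(X_t)_{0\le t\le T}$ is a Gaussian process, and along the filtration $\{\mathcal F_t\}$ each increment $X_t-X_s$ splits into a deterministic drift, a Brownian martingale increment $\int_s^t\s_1\,dB$ (note that $B$ remains an $\{\mathcal F_t\}$-Brownian motion since $B\perp B^H$), and a Wiener integral $\int_s^t\s_2\,dB^H$ with respect to the fractional Brownian motion. The plan is the classical one for such formulas: first prove the identity for the exponential test functions $F(t,x)=g(t)e^{i\l x}$ with $g\in C^1([0,T])$ and $\l\in\R$, and then pass to a general $F\in C^{1,2}$ by Fourier representation.

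\emph{Reduction.} By localizing at the exit time of $X$ from the ball of radius $n$ — which exceeds any fixed $t$ for $n$ large, because $s\mapsto X_s$ is a.s. continuous — it suffices to treat $F$ with $F,\partial_tF,\partial_xF,\partial_{xx}F$ bounded; mollifying $F$ in $(t,x)$ and using locally uniform convergence of these four functions then reduces the problem to $F\in C^\infty$ whose Fourier transform $\hat F(t,\l)$ in the $x$ variable decays rapidly in $\l$. Writing $F(t,x)=\int_{\R}\hat F(t,\l)e^{i\l x}\,d\l$ and invoking a stochastic Fubini theorem — legitimate here thanks to the second moment bound of Theorem \ref{theo-1} combined with the rapid decay of $\hat F(t,\cdot)$ — the general identity follows by integrating the exponential case against $\hat F(t,\l)\,d\l$.

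\emph{The exponential case.} Put $A_t=X_0+\int_0^t\alpha$, $M_t=\int_0^t\s_1\,dB$, $N_t=\int_0^t\s_2\,dB^H$, so that $X_t=A_t+M_t+N_t$ and $e^{i\l X_t}=e^{i\l A_t}e^{i\l M_t}e^{i\l N_t}$. The classical It\^o formula gives $d(e^{i\l M_t})=i\l e^{i\l M_t}\,dM_t-\tfrac{\l^2}{2}\s_1^2(t)e^{i\l M_t}\,dt$. For the fractional factor, $N_t$ is centered Gaussian with $\E N_t^2=\|\s_2\|_t^2$, hence $e^{i\l N_t}=\exp^{\diamond}(i\l N_t)\,e^{-\frac{\l^2}{2}\|\s_2\|_t^2}$; since the Wick exponential is the ``exponential martingale'' of the divergence calculus one has $d\,\exp^{\diamond}(i\l N_t)=i\l\s_2(t)\exp^{\diamond}(i\l N_t)\,dB_t^H$, and because $\|\s_2\|_t$ is $C^1$ the ordinary product rule yields $d(e^{i\l N_t})=i\l\s_2(t)e^{i\l N_t}\,dB_t^H-\tfrac{\l^2}{2}\tfrac{d}{dt}\|\s_2\|_t^2\,e^{i\l N_t}\,dt$. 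Combining the three factors — using that $M$ and $N$ have no mutual variation because $B\perp B^H$, while $A$ is deterministic — then multiplying by $g(t)$ and adding the term $g'(t)e^{i\l X_t}\,dt$ produces exactly the asserted formula for $F(t,x)=g(t)e^{i\l x}$: indeed $\partial_{xx}F=-\l^2F$ and the coefficient multiplying $-\tfrac{\l^2}{2}F$ that appears is precisely $\s_1^2(t)+\tfrac{d}{dt}\|\s_2\|_t^2$.

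\emph{Main obstacle.} The delicate point throughout is the handling of the It\^o--Skorohod integral. One must (i) justify the differentiation rule for the Wick exponential and the product rule above inside the divergence calculus of Section 2 — equivalently, in a Riemann-sum proof, show that $\sum_k\partial_xF(t_k,X_{t_k})\bigl(N_{t_{k+1}}-N_{t_k}\bigr)$ converges to $\int_0^t\partial_xF(s,X_s)\s_2(s)\,dB_s^H$ plus the trace correction $\tfrac12\int_0^t\partial_{xx}F(s,X_s)\tfrac{d}{ds}\|\s_2\|_s^2\,ds$; this correction cannot arise from a quadratic variation, since $\sum_k\E|N_{t_{k+1}}-N_{t_k}|^2\le C\sum_k|t_{k+1}-t_k|^{2H}\to0$ as the mesh tends to $0$ (here $2H>1$ is used), so it is a genuine divergence-integral effect; and (ii) control $\partial_xF(\cdot,X_\cdot)\s_2(\cdot)$ and its Malliavin $\r$-derivative in $\mathbb{L}^{1,2}_H$, uniformly along the mollification, so that Theorem \ref{theo-1} applies and the limits can be exchanged. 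By contrast, the drift and Brownian contributions, the Riemann sum $\sum_k\partial_sF(t_k,X_{t_k})(t_{k+1}-t_k)$ and the quadratic Brownian sum $\sum_k\partial_{xx}F(t_k,X_{t_k})(M_{t_{k+1}}-M_{t_k})^2$, together with the vanishing of all Taylor remainders and cross terms — drift-squared, drift--noise, and $(M_{t_{k+1}}-M_{t_k})(N_{t_{k+1}}-N_{t_k})$, the last of order $|t_{k+1}-t_k|^{H+1/2}$ by independence and Cauchy--Schwarz — are handled by routine arguments using the a.s.\ continuity of the paths of $X$.
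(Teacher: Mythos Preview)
The paper does not prove this statement: it is quoted verbatim as ``the fractional It\^o formula (see [\cite{Fei}, Theorem 3.1])'' and no argument is given. So there is no in-paper proof to compare against; the authors simply import the result from Fei--Xia--Zhang.

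Your sketch is a legitimate route, and in fact different from what one finds in the source literature. The proofs in \cite{Fei} and in Hu's monograph \cite{Hu} proceed by a direct Taylor/Riemann-sum argument on a partition, identifying the divergence integral and the $\tfrac{d}{ds}\|\s_2\|_s^2$ correction in one stroke; you instead reduce to exponential test functions $g(t)e^{i\l x}$ via Fourier, treat the Brownian and fractional factors separately using the Wick exponential identity $e^{i\l N_t}=\exp^{\diamond}(i\l N_t)\,e^{-\l^2\|\s_2\|_t^2/2}$, and then reassemble. The advantage of your approach is that the fractional correction term is read off algebraically from the Wick exponential, with no need to analyse second-order Taylor remainders for a process that is not a semimartingale; the cost is the stochastic Fubini step and the need for a product rule in the divergence calculus when you ``combine the three factors''. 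On that last point your justification (``$M$ and $N$ have no mutual variation because $B\perp B^H$'') is the right intuition but needs to be phrased in Malliavin terms: what you actually use is that $e^{i\l M_t}$ has zero $D^H$-derivative and $e^{i\l N_t}$ has zero $D$-derivative, so multiplying a $dB^H$-Skorohod integral by a $\s(B)$-measurable factor (and vice versa) introduces no trace correction. With that clarification, and with the $\mathbb L^{1,2}_H$ bounds you flag in your ``main obstacle'' paragraph, the argument goes through.
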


Let us finish this section by giving a fractional It\^o chain rule (see[\cite{Fei}, Theorem 3.2]). 
\begin{theorem} Assume that for $i=1,2$, the processes $\mu_i$, $\alpha_i$ and $\vartheta_i$, satisfy
$$\E\left[\int_0^T\mu_i^2(s)ds + \int_0^T\alpha_i^2(s)ds + \int_0^T\vartheta_i^2(s)ds\right]<\infty.$$
Suppose that $D_t\alpha_i(s)$ and $\mathbb{D}^H_t\vartheta_i(s)$ are continuously differentiable with respect to $(s,t)\in [0,T]^2$ for almost all $\omega\in\Omega$. Let $X_t$ and $Y_t$ be two processes satisfying
\begin{align*}
 X_t &= X_0 + \int_0^t\mu_1(s)ds +  \int_0^t\alpha_1(s)dB_s  + \int_0^t\vartheta_1(s)dB_s^H, \quad\quad 0\leq t\leq T,\\
 Y_t &= Y_0 + \int_0^t\mu_2(s)ds +  \int_0^t\alpha_2(s)dB_s  + \int_0^t\vartheta_2(s)dB_s^H, \quad\quad 0\leq t\leq T.
\end{align*}
If for $i=1,2$, the following conditions hold:
$$\E\left[\int_0^T|D_t\alpha_i(s)|^2dsdt\right]<+\infty ,\quad\quad \E\left[\int_0^T|\mathbb{D}_t^H\vartheta_i(s)|^2dsdt\right]<+\infty,$$
then
\begin{align*}
 X_tY_t &= X_0Y_0 +\int_0^tX_sdY_s + \int_0^tY_sdX_s   \\
       &+\int_0^t\left[\alpha_1(s)D_sY_s + \alpha_2(s)D_sX_s  + \vartheta_1(s)\mathbb{D}_s^HY_s + \vartheta_2(s)\mathbb{D}_s^HX_s \right]ds,
\end{align*}
which may be written formally as
$$d\left(X_tY_t\right) = X_tdY_t + Y_tdX_t +\left[\alpha_1(t)D_tY_t + \alpha_2(t)D_tX_t  + \vartheta_1(t)\mathbb{D}_t^HY_t + \vartheta_2(t)\mathbb{D}_t^HX_t \right]dt.$$
\end{theorem}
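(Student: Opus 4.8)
\noindent\emph{Proof idea.} The plan is to reduce the statement, by bilinearity and polarisation, to a single ``square'' identity, and then to prove that identity first for simple integrands and pass to the limit. Each of the three ingredients of the claimed formula --- the product $X_tY_t$, the combination $\int_0^t X_s\,dY_s+\int_0^t Y_s\,dX_s$, and the absolutely continuous correction $\int_0^t[\alpha_1(s)D_sY_s+\alpha_2(s)D_sX_s+\vartheta_1(s)\mathbb{D}_s^HY_s+\vartheta_2(s)\mathbb{D}_s^HX_s]\,ds$ --- is a symmetric bilinear function of the ordered pair $(X,Y)$ of It\^o processes. Since $X_t\pm Y_t$ is again a process of the form treated in the theorem, with drift $\mu_1\pm\mu_2$, Brownian coefficient $\alpha_1\pm\alpha_2$ and fractional coefficient $\vartheta_1\pm\vartheta_2$, and these linear combinations inherit all the integrability and joint-differentiability hypotheses, the identity $X_tY_t=\tfrac14[(X_t+Y_t)^2-(X_t-Y_t)^2]$ shows that it suffices to prove, for one process $Z_t=Z_0+\int_0^t\mu(s)\,ds+\int_0^t\alpha(s)\,dB_s+\int_0^t\vartheta(s)\,dB_s^H$ satisfying the hypotheses,
\[
Z_t^2=Z_0^2+2\int_0^t Z_s\,dZ_s+2\int_0^t\bigl[\alpha(s)D_sZ_s+\vartheta(s)\mathbb{D}_s^HZ_s\bigr]\,ds,\qquad 0\le t\le T,
\]
after which a direct substitution into the polarisation identity recovers the general formula term by term.

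\emph{The square identity for simple integrands.} Suppose first that $\mu,\alpha,\vartheta$ are simple, i.e.\ constant --- with smooth, square-integrable values --- on the cells of a partition $0=s_0<\dots<s_N=t$. Then $\int_0^t\alpha\,dB_s$ and $\int_0^t\vartheta\,dB_s^H$ are finite sums of elementary Skorohod integrals, to which Theorem \ref{theo-1} applies cell by cell, and $s\mapsto Z_s\alpha(s)$, $s\mapsto Z_s\vartheta(s)$ lie in the relevant domains so that $\int_0^t Z_s\,dZ_s$ makes sense. Telescoping $Z_t^2-Z_0^2=\sum_k(Z_{s_{k+1}}^2-Z_{s_k}^2)$ with $a^2-b^2=2b(a-b)+(a-b)^2$ produces: the Skorohod term $2\sum_k Z_{s_k}(Z_{s_{k+1}}-Z_{s_k})\to 2\int_0^t Z_s\,dZ_s$; from the $B$-increments the classical quadratic variation $\to\int_0^t\alpha^2(s)\,ds$; from the $B^H$-increments a vanishing pathwise quadratic variation (this is where $H>\tfrac12$ enters), but --- since $\int\vartheta\,dB^H$ is a Skorohod, not a pathwise, integral --- a Wick correction which by the isometry of Theorem \ref{theo-1} equals $2\int_0^t\vartheta(s)\mathbb{D}_s^HZ_s\,ds$, consistent in the deterministic case with $\frac{d}{ds}\|\vartheta\|_s^2=2\vartheta(s)\mathbb{D}_s^HZ_s$ and with the fractional It\^o formula applied to $F(t,x)=x^2$; analogously, the diagonal Malliavin contribution of the Brownian Skorohod part gives $2\int_0^t\alpha(s)D_sZ_s\,ds$, with the diagonal values $D_sZ_s$, $\mathbb{D}_s^HZ_s$ understood as the traces furnished by the smoothness hypotheses on $\alpha,\vartheta$. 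The mixed $ds$--$dB$, $ds$--$dB^H$ and $dB$--$dB^H$ contributions vanish as the mesh tends to $0$, the last because $B$ and $B^H$ are independent.

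\emph{Passage to the limit, and the main difficulty.} For general $\mu,\alpha,\vartheta$ one approximates them by simple processes $\mu^n,\alpha^n,\vartheta^n$ with $\alpha^n\to\alpha$, $\vartheta^n\to\vartheta$, $\mu^n\to\mu$ in $L^2(\Omega\times[0,T])$, $D_t\alpha^n(s)\to D_t\alpha(s)$ in $L^2(\Omega\times[0,T]^2)$ and $\mathbb{D}_t^H\vartheta^n(s)\to\mathbb{D}_t^H\vartheta(s)$ in $L^2(\Omega\times[0,T]^2)$ --- possible precisely because $\E\int_0^T\!\int_0^T|D_t\alpha(s)|^2\,ds\,dt<\infty$, $\E\int_0^T\!\int_0^T|\mathbb{D}_t^H\vartheta(s)|^2\,ds\,dt<\infty$ and these derivatives are jointly continuous in $(s,t)$. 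Then $Z^n_s\to Z_s$ in $L^2$ uniformly in $s$ by Theorem \ref{theo-1}, the integrands $Z^n\alpha^n$, $Z^n\vartheta^n$ converge in the norms governing their stochastic integrals, so $\int_0^t Z^n_s\,dZ^n_s\to\int_0^t Z_s\,dZ_s$ in $L^2$ by the isometry, and the four correction integrals converge by dominated convergence; passing to the limit in the previous step yields the square identity in general, and with the first step the theorem follows. The crux is the second step: unlike in the semimartingale case, the fractional noise contributes nothing through pathwise quadratic variation yet does contribute $2\int\vartheta\,\mathbb{D}_s^HZ_s\,ds$ through its Wick/Skorohod structure, and making the product formula for Skorohod integrals, the diagonal traces $D_sZ_s$ and $\mathbb{D}_s^HZ_s$, and all numerical constants fit together --- and then ensuring these operations survive the limit, which is exactly what the square-integrability and joint-continuity hypotheses on $D_t\alpha_i$ and $\mathbb{D}_t^H\vartheta_i$ are there to guarantee --- is where the real work lies.
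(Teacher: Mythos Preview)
The paper does not prove this theorem. It is quoted verbatim as a preliminary result from \cite{Fei}, Theorem~3.2, with the introductory line ``Let us finish this section by giving a fractional It\^o chain rule (see [\cite{Fei}, Theorem 3.2])'' and no argument supplied. There is therefore nothing in the paper to compare your attempt against.

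As for the attempt itself, the overall architecture---polarisation to a square identity, proof for simple integrands, passage to the limit---is the standard route for It\^o-type product rules and is sound in outline. One point deserves care: in the simple-integrand step you list both a classical quadratic-variation contribution $\int_0^t\alpha^2(s)\,ds$ from the $B$-increments \emph{and} a separate diagonal Malliavin contribution $2\int_0^t\alpha(s)D_sZ_s\,ds$ from the Brownian Skorohod part. These are not independent pieces to be added: when one rewrites $2Z_{s_k}\!\int_{s_k}^{s_{k+1}}\!\alpha\,dB$ as a Skorohod integral plus a trace term via the product rule $F\,\delta(u)=\delta(Fu)+\langle DF,u\rangle$, and separately expands $(Z_{s_{k+1}}-Z_{s_k})^2$, the bookkeeping must be done so that the final correction is exactly $2\int_0^t\alpha(s)D_sZ_s\,ds$ (with $D_sZ_s$ containing $\alpha(s)$ as its leading part), not that quantity plus an extra $\int_0^t\alpha^2\,ds$. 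The analogous remark applies to the fractional side. Your narrative reads as if both contributions survive, which would overcount by a factor; the actual computation in \cite{Fei} keeps these straight. If you want a genuine proof rather than a sketch, that reconciliation is the place to be explicit.
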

In order to present a stochastic averaging principle, we need the following \cite[Lemma 1]{Xu14}
\begin{lemma}\label{var}
Let $B^H_t$ be a fractional Brownian motion with $\frac{1}{2}< H <1$, and $u(s)$ be a stochastic process in $\mathbb{L}^{1,2}_H$. For every $T <+\infty$, there exists
a constant $C_0(H, T )=HT^{2H-1}$ such that
$$\E\left[\left(\int_0^T\left|u(s)\right|dB_s^H\right)^2\right] \leq C_0(H,T)\E\left[\int_0^T\left|u(s)\right|^2ds\right] +C_0T^2.$$
\end{lemma}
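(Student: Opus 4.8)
To prove the estimate, the natural route is to interpret $\int_0^T|u(s)|\,dB_s^H$ as an It\^o--Skorohod integral and apply the isometry of Theorem~\ref{theo-1} to the integrand $F_s:=|u(s)|$. The preliminary point is that $|u|$ itself lies in $\mathbb{L}^{1,2}_H$: since $\rho(s,r)=H(2H-1)|s-r|^{2H-2}\ge0$ for $H\in(1/2,1)$, one has $\||u|\|_T^2=\int_0^T\!\int_0^T\rho(s,r)|u_s||u_r|\,ds\,dr\le\int_0^T u_s^2\big(\int_0^T\rho(s,r)\,dr\big)ds<+\infty$ (the inner integral being bounded on $[0,T]$), and, since $x\mapsto|x|$ is $1$-Lipschitz, a standard smoothing of $|\cdot|$ together with the closability of $D^H$ gives $\big|\mathbb{D}_s^H|u_t|\big|\le\big|\mathbb{D}_s^H u_t\big|$ for a.e.\ $(s,t,\omega)$, so $\E\int_0^T\!\int_0^T\big|\mathbb{D}_s^H|u_t|\big|^2ds\,dt\le\E\int_0^T\!\int_0^T\big|\mathbb{D}_s^H u_t\big|^2ds\,dt<+\infty$. (Throughout one may assume $\E\int_0^T|u_s|^2ds<+\infty$, since otherwise the asserted bound is vacuous.) Hence Theorem~\ref{theo-1} applies to $F=|u|$ and yields
\[
\E\left(\int_0^T|u_s|\,dB_s^H\right)^2=\E\,\|u\|_T^2+\E\int_0^T\!\int_0^T\mathbb{D}_s^H|u_t|\,\mathbb{D}_t^H|u_s|\,ds\,dt .
\]

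The next step is to estimate the two terms on the right separately. For the first, using $|u_s||u_r|\le\tfrac12(u_s^2+u_r^2)$, the positivity and symmetry of $\rho$, and the explicit value $\int_0^T\rho(s,r)\,dr=H\big(s^{2H-1}+(T-s)^{2H-1}\big)$, one gets
\[
\E\,\|u\|_T^2\le\E\int_0^T u_s^2\left(\int_0^T\rho(s,r)\,dr\right)ds\le C_0(H,T)\,\E\int_0^T|u_s|^2\,ds,
\]
which is exactly where the constant $C_0(H,T)=HT^{2H-1}$ of the statement is read off (up to an absolute factor that one can absorb, or avoid altogether by instead invoking the Young-type bound $\|u\|_T^2\le c_H\big(\int_0^T|u_s|^{1/H}ds\big)^{2H}$ followed by H\"older's inequality, valid since $1/H<2$). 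For the second term, by $2ab\le a^2+b^2$ and the symmetry of the double integral under $(s,t)\mapsto(t,s)$,
\[
\left|\E\int_0^T\!\int_0^T\mathbb{D}_s^H|u_t|\,\mathbb{D}_t^H|u_s|\,ds\,dt\right|\le\E\int_0^T\!\int_0^T\big|\mathbb{D}_s^H|u_t|\big|^2ds\,dt\le\E\int_0^T\!\int_0^T\big|\mathbb{D}_s^H u_t\big|^2ds\,dt,
\]
which is finite by membership in $\mathbb{L}^{1,2}_H$; in the situations where the lemma is invoked the integrands coming from \eqref{SFrBSDEs} have $\rho$-Malliavin derivatives bounded uniformly in $(s,t,\omega)$, so this quantity is dominated by a constant times $T^2$, which is the remainder term $C_0T^2$. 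Combining the two bounds gives the assertion.

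The step I expect to be the main obstacle is the control of the Malliavin cross-term: unlike the Brownian It\^o isometry, the fractional isometry of Theorem~\ref{theo-1} genuinely produces the extra double integral of $\mathbb{D}^H u$, and there is no soft way to reduce it to $\E\int_0^T|u_s|^2ds$ --- this is precisely why the statement carries the additive $C_0T^2$ and why $u$ is required to lie in $\mathbb{L}^{1,2}_H$ rather than merely in $L^2(\O\times[0,T])$. The two remaining technical points are minor: the non-smoothness of $x\mapsto|x|$, handled by the smoothing/closability argument above, and the careful tracking of absolute constants needed to display exactly $HT^{2H-1}$.
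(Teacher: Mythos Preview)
The paper does not prove this lemma; it is quoted from \cite[Lemma~1]{Xu14} without argument, so there is no in-paper proof to compare your attempt against.

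On your attempt itself: the treatment of $\E\|u\|_T^2$ is essentially correct, although the constant you actually obtain is $2^{2-2H}HT^{2H-1}$ rather than $HT^{2H-1}$, since for $H\in(\tfrac12,1)$ the function $s\mapsto s^{2H-1}+(T-s)^{2H-1}$ is concave and attains its maximum $2^{2-2H}T^{2H-1}$ at $s=T/2$, not the value $T^{2H-1}$ at the endpoints.

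The genuine gap is precisely where you flag it. After applying the isometry of Theorem~\ref{theo-1}, your chain of inequalities controls the Malliavin cross-term only by
\[
\E\int_0^T\!\!\int_0^T\bigl|\mathbb{D}_s^H u_t\bigr|^2\,ds\,dt,
\]
which is merely \emph{finite} for $u\in\mathbb{L}^{1,2}_H$; nothing in the hypotheses forces it below $C_0T^2$ with the fixed constant $C_0=HT^{2H-1}$. Your workaround --- restricting to ``the situations where the lemma is invoked'', where the $\rho$-Malliavin derivatives happen to be uniformly bounded --- imports an assumption absent from the statement, so what you have actually established is the weaker inequality with $C_0T^2$ replaced by the $\mathbb{L}^{1,2}_H$-seminorm of $u$. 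Either the original in \cite{Xu14} carries such an extra hypothesis (or uses a different normalisation of the Skorohod integral), or the additive term must depend on $u$; in any case your argument, as you yourself acknowledge in the final paragraph, does not prove the lemma as displayed.
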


We are now in position to move on to study our main subject.
\section{Averaging Principle for SFrBSDEs}
\subsection{SFrBSDEs}
Let us consider the following process
$$\eta_t= \e_0 + b(t) + \int_0^t\sigma_1(s)dB_s +\int_0^t\sigma_2(s)dB_s^H,   \quad   0\le t\le T$$
where the coefficients  $\e_0$, $b$, $\s_1$ and $\s_2$ satisfy:
\begin{itemize}
\item $\e_0$ is a given constant,
\item	$b,\s_1, \s_2: [0,T]\rightarrow\mathbf{R}$ are  deterministic continuous functions, $\s_1$ and $\s_2$ are differentiable and $\s_1(t)\neq 0,\;\s_2(t)\neq 0$ such that
\begin{equation}
|\sigma|_t^2=\int_0^t\sigma_1^2(s)ds + \left\|\sigma_2\right\|_t^2,   \quad   0\le t\le T,\label{qasi1}
\end{equation}
\end{itemize} 
$$\text{where} \quad\quad\quad\left\|\s_2\right\|_{t}^2=H(2H-1)\int_0^t\int_0^t |u-v|^{2H-2}\s_2(u)\s_2(v)dudv.\quad\quad\quad\quad\quad\quad\quad\quad\quad\quad$$
$$\text{Let} \quad\quad\quad\quad \hat{\s}_2(t)=\int_0^t\rho(t,v)\s_2(v)dv,   \quad   0\le t\le T.\quad\quad\quad\quad\quad\quad\quad\quad\quad\quad\quad\quad\quad\quad\quad\quad\quad\quad$$

The next Remark will be useful in the sequel.
\begin{remark}\label{rm1} The function $|\s|_t^2$ defined by eq.\eqref{qasi1} is continuously differentiable with respect to $t$ on $[0,T]$, and 
\begin{itemize}
\item[a)] $\frac{d}{dt}|\s|_t^2 =\s_1^2(t)+ \frac{d}{dt}\left\|\s_2\right\|_t^2= \s_1^2(t) + \s_2(t)\hat{\s}_2(t)>0, \quad 0\le t\le T.$ 
\item[b)] for a suitable constant  $C_1>0$, $\;\inf_{0\le t\le T} \frac{\hat \s_2(t)}{\s_2(t)}\geq C_1.$ 
\end{itemize}
\end{remark}
Given $\xi$ a  measurable  real   valued random  variable  and the   function
$$ f : \O\times [0, T]\times   \R\times \R\times\R\times \R \to  \R,$$  
we  consider the  BSDEs driven by  both standard and fractional Brownian motion (FrBSDEs) 
\begin{equation}\label{baw}
Y_t = \xi + \!\int_t^Tf(s, \e_s, Y_s, Z_{1,s},Z_{2,s})ds -\!\int_t^TZ_{1,s}dB_s - \!\int_t^TZ_{2,s}dB_s^H,\quad 0\leq t\leq T.
\end{equation}
We  introduce the following sets (where $\E$ denotes the mathematical expectation with respect to the probability measure $\Prb$) :
\begin{itemize}
\item $\mathscr{C}_{\mbox{pol}}^{1,2}\!\left([0, T]\times \R\right)$  is the space of all $\mathscr{C}^{1, 2}$-functions over 
$[0, T]\times \R$, which together with their derivatives are of polynomial growth, 
\item ${\cal V}_{[0,T]}=\left\{Y=\psi(\cdot,\e):\; \psi\in\mathscr{C}_{\mbox{pol}}^{1, 2}([0, T]\times\R), \frac{\partial \psi}{\partial t} \; \text{is bounded},\; t\in[0, T] \right\},$
\item $\tl{\cal  V}_{[0,T]}$ the  completion  of  ${\cal V}_{[0,T]}$   under  the  following  norm  
$$\Vert Y\Vert =\left(\int_0^T \E|Y_t|^2dt\right)^{1/2}=\left(\int_0^T\E|\psi(t,\e_t)|^2dt \right)^{1/2}.$$ 
\end{itemize}

\begin{definition}
A triplet of  processes  $(Y_t, Z_{1,t}, Z_{2,t})_{0\le t \le T}$ is called  a solution to SFrBSDE \eqref{baw},
if $(Y_t, Z_{1,t}, Z_{2,t})_{0\le t \le T}\in\tl{\cal  V}_{[0,T]}\times\tl{\cal  V}_{[0,T]}\times\tl{\cal  V}_{[0,T]}$ and satisfies eq.\eqref{baw}. 
\end{definition}

We have the following (see [\cite{Fei}, Theorem 5.3])
\begin{theorem} \label{eth0}
Assume that $\s_1$ and $\s_2$ are continuous and $|\s|_t^2$ defined by eq.\eqref{qasi1} is a strictly increasing function of $t$.
Let the SFrBSDE \eqref{baw} has a  solution  of the form\\ $ \left(Y_t=\psi(t,\eta_t),\;  Z_{1,t}=-\varphi_1(t,\eta_t), \;Z_{2,t}=-\varphi_2(t,\eta_t)\right)$, where $\psi\in\mathscr{C}^{1, 2}([0, T]\times\R)$. Then
$$ \varphi_1(t,x)=\s_1(t)\psi_x^\prime(t,x), \quad \varphi_2(t,x)=\s_2(t)\psi_x^\prime(t,x).$$
\end{theorem}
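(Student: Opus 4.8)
The plan is to apply the fractional It\^o formula recalled above to the composition $\psi(t,\eta_t)$, to rewrite the SFrBSDE \eqref{baw} in forward (It\^o) form, and then to match the two resulting decompositions of the same process.

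First, under the standing hypotheses the process $\eta_t$ has exactly the structure required by the fractional It\^o formula, with diffusion coefficients $\sigma_1$ and $\sigma_2$ and with $\|\sigma_2\|_t^2$ continuously differentiable (Remark \ref{rm1}); in particular $d\eta_s$ has $dB_s$--coefficient $\sigma_1(s)$ and $dB_s^H$--coefficient $\sigma_2(s)$. Applying that formula with $F=\psi$, and using Remark \ref{rm1}(a) to express $\tfrac{d}{ds}\|\sigma_2\|_s^2=\sigma_2(s)\hat{\sigma}_2(s)$, one obtains, for $0\le t\le T$,
\begin{equation}\label{eq-ito-psi}
\psi(t,\eta_t)=\psi(0,\eta_0)+\int_0^t A_s\,ds+\int_0^t \sigma_1(s)\,\psi_x^\prime(s,\eta_s)\,dB_s+\int_0^t \sigma_2(s)\,\psi_x^\prime(s,\eta_s)\,dB_s^H ,
\end{equation}
where $A_s$ gathers the three absolutely continuous terms, whose precise form plays no role below.

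Second, evaluating \eqref{baw} at $t$ and at $0$ and subtracting, the assumed solution $(Y,Z_1,Z_2)$ satisfies the forward identity
$$Y_t=Y_0-\int_0^t f(s,\eta_s,Y_s,Z_{1,s},Z_{2,s})\,ds+\int_0^t Z_{1,s}\,dB_s+\int_0^t Z_{2,s}\,dB_s^H .$$
Substituting $Y_t=\psi(t,\eta_t)$, $Z_{1,s}=-\varphi_1(s,\eta_s)$ and $Z_{2,s}=-\varphi_2(s,\eta_s)$ gives a second decomposition of the same process $\psi(t,\eta_t)$, now with absolutely continuous part $-\int_0^t f(\cdots)\,ds$, with $dB_s$--part $-\int_0^t\varphi_1(s,\eta_s)\,dB_s$ and with $dB_s^H$--part $-\int_0^t\varphi_2(s,\eta_s)\,dB_s^H$.

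Third --- and this is the crux --- I would invoke uniqueness of the It\^o--Skorohod decomposition: an adapted process written as the sum of an absolutely continuous process, a stochastic integral with respect to $B$ with square--integrable integrand, and a fractional Skorohod integral with respect to $B^H$ with integrand in $\mathbb{L}^{1,2}_H$, has these three components uniquely determined. Because $B$ and $B^H$ are independent and $H>1/2$, the $B$--integral is the genuine martingale part and is therefore pinned down (by the It\^o isometry, or by the quadratic variation), and the leftover equality, between a fractional Skorohod integral and an absolutely continuous process, then forces both sides to vanish and identifies the $B^H$--integrand via the second--moment formula of Theorem \ref{theo-1}. Comparing the coefficients of $dB_s$ and of $dB_s^H$ in \eqref{eq-ito-psi} with those of the decomposition just obtained yields, for a.e.\ $s\in[0,T]$ and $\mathbb{P}$-a.s.,
$$\varphi_1(s,\eta_s)=\sigma_1(s)\,\psi_x^\prime(s,\eta_s),\qquad \varphi_2(s,\eta_s)=\sigma_2(s)\,\psi_x^\prime(s,\eta_s).$$

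Finally, to turn equality along the random curve $s\mapsto\eta_s$ into the pointwise identities of the statement, I would use that for each $t\in(0,T]$ the variable $\eta_t$ is Gaussian with variance $|\sigma|_t^2>0$ (strict monotonicity of $t\mapsto|\sigma|_t^2$ together with $|\sigma|_0^2=0$), hence has full support on $\R$; since $\varphi_i$ and $\sigma_i\,\psi_x^\prime$ are continuous in $x$, the above a.s.\ equalities propagate to $\varphi_i(t,x)=\sigma_i(t)\,\psi_x^\prime(t,x)$ for every $x\in\R$ and a.e.\ $t$, and then to every $t\in[0,T]$ by continuity (including $t=0$). The only genuinely delicate point is the third step: there is no martingale representation theorem for $B^H$, so the uniqueness of the decomposition has to be extracted from the independence of $B$ and $B^H$, the vanishing quadratic variation of the fractional integral for $H>1/2$, and the isometry--type identity of Theorem \ref{theo-1}, rather than from a semimartingale argument; everything else is routine bookkeeping.
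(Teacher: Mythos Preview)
The paper does not actually prove this theorem: it is quoted verbatim from \cite{Fei}, Theorem~5.3, with no argument supplied. So there is no ``paper's own proof'' to compare against; your strategy (apply the fractional It\^o formula to $\psi(t,\eta_t)$, rewrite \eqref{baw} in forward form, and match the three integrands by uniqueness of the decomposition, then pass from equality along $\eta_t$ to equality in $x$ via the full support of the Gaussian $\eta_t$) is precisely the natural route and is, in outline, what one finds in \cite{Fei}.

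One concrete point you should not gloss over: your own computation does not give the sign stated. From \eqref{eq-ito-psi} the $dB_s$-integrand is $+\sigma_1(s)\psi_x'(s,\eta_s)$, while from the forward form of \eqref{baw} together with $Z_{1,s}=-\varphi_1(s,\eta_s)$ the $dB_s$-integrand is $-\varphi_1(s,\eta_s)$. Matching yields $\varphi_1(s,\eta_s)=-\sigma_1(s)\psi_x'(s,\eta_s)$, not $+\sigma_1(s)\psi_x'(s,\eta_s)$; the same holds for $\varphi_2$. This is consistent with the paper's own use of the result in Proposition~\ref{pro}, where the authors write $Z_{1,t}=\sigma_1(t)\psi_x'(t,\eta_t)$ directly, i.e.\ the minus sign in the hypothesis $Z_{i,t}=-\varphi_i(t,\eta_t)$ of Theorem~\ref{eth0} appears to be a typographical slip inherited from \cite{Fei}. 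You should flag this rather than silently reproduce the formula as stated.

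A second, smaller remark: in the ``crux'' step you correctly isolate the $B$-integral via its quadratic variation (this is fine since $B$ and $B^H$ are independent and the Skorohod $B^H$-integral has zero quadratic variation for $H>\tfrac12$), but the subsequent claim that the remaining identity between a fractional Skorohod integral and an absolutely continuous process forces both to vanish deserves one more line: take expectations (Theorem~\ref{theo-1} gives mean zero for the $B^H$-integral) to kill the absolutely continuous part first, and then use the second-moment identity of Theorem~\ref{theo-1} (with the positivity coming from Remark~\ref{rm1}) to conclude the $B^H$-integrand vanishes. Your sketch says essentially this, but it is the only place where the argument is not entirely mechanical.
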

The next proposition will be useful in the sequel. 
\begin{proposition} \label{pro} Let $(Y_t, Z_{1,t}, Z_{2,t})_{0\le t \le T}$ be a solution of the SFrBSDE \eqref{baw}. Then  for almost $t\in[0, T]$,  
$$D_tY_t= Z_{1,t},\quad\text{and} \quad  \mathbb{D}^H_tY_t=\frac{\hat \s_2(t)}{\s_2(t)}Z_{2,t}.$$  
\end{proposition}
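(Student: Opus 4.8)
The plan is to exploit the special structure of the solution established in Theorem~\ref{eth0}, namely that a solution of \eqref{baw} has the form $Y_t=\psi(t,\eta_t)$ with $Z_{1,t}=-\varphi_1(t,\eta_t)=-\s_1(t)\psi_x^\prime(t,\eta_t)$ and $Z_{2,t}=-\varphi_2(t,\eta_t)=-\s_2(t)\psi_x^\prime(t,\eta_t)$. Since $\psi\in\mathscr{C}^{1,2}([0,T]\times\R)$ and $\eta_t$ is the Gaussian process $\eta_t=\e_0+b(t)+\int_0^t\s_1(s)dB_s+\int_0^t\s_2(s)dB_s^H$, I would compute the two Malliavin derivatives $D_tY_t$ and $\mathbb{D}^H_tY_t$ directly via the chain rule for Malliavin calculus. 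The key point is that $\psi$ depends on $\omega$ only through the single random variable $\eta_t$, so the chain rule gives $D_s Y_t=\psi_x^\prime(t,\eta_t)\,D_s\eta_t$ and similarly $D_s^H Y_t=\psi_x^\prime(t,\eta_t)\,D_s^H\eta_t$.

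First I would record the Malliavin derivatives of $\eta_t$ itself. Because the Brownian integral $\int_0^t\s_1(s)dB_s$ has standard Malliavin derivative $D_s\left(\int_0^t\s_1(u)dB_u\right)=\s_1(s)\mathbf{1}_{[0,t]}(s)$, and the fractional integral $\int_0^t\s_2(s)dB_s^H$ has derivative $D_s^H\left(\int_0^t\s_2(u)dB_u^H\right)=\s_2(s)\mathbf{1}_{[0,t]}(s)$, while the cross derivatives vanish by the assumed independence of $B$ and $B^H$, we get $D_s\eta_t=\s_1(s)\mathbf{1}_{[0,t]}(s)$ and $D_s^H\eta_t=\s_2(s)\mathbf{1}_{[0,t]}(s)$. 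Evaluating at $s=t$ (which is the relevant trace value appearing in the chain rule of Theorem~2.4) yields $D_tY_t=\psi_x^\prime(t,\eta_t)\,\s_1(t)=-Z_{1,t}$... so I must be careful with the sign convention: with $Z_{1,t}=-\s_1(t)\psi_x^\prime(t,\eta_t)$ the identity reads $D_tY_t=\s_1(t)\psi_x^\prime(t,\eta_t)$, and since also $-Z_{1,t}=\s_1(t)\psi_x^\prime(t,\eta_t)$, one has $D_tY_t=-Z_{1,t}$; matching the claimed $D_tY_t=Z_{1,t}$ requires the sign convention in the representation to be read as $Z_{1,t}=\varphi_1(t,\eta_t)$, i.e. without the minus, and I would reconcile this with the statement of Theorem~\ref{eth0} before writing the final version. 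The fractional part is analogous but carries the extra weight: by definition $\mathbb{D}_t^HY_t=\int_0^T\rho(t,s)D_s^HY_s\,ds$; since $D_s^HY_s=\psi_x^\prime(s,\eta_s)\s_2(s)$ is, up to the factor $\psi_x^\prime$, the integrand defining $\hat\s_2$, and since $\psi_x^\prime$ must be pulled through carefully, one obtains $\mathbb{D}_t^HY_t=\psi_x^\prime(t,\eta_t)\,\hat\s_2(t)=\frac{\hat\s_2(t)}{\s_2(t)}\bigl(\s_2(t)\psi_x^\prime(t,\eta_t)\bigr)=\frac{\hat\s_2(t)}{\s_2(t)}Z_{2,t}$ (again modulo the sign convention).

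The main obstacle is making the fractional chain-rule computation fully rigorous: one cannot simply write $\mathbb{D}_t^HY_t=\psi_x^\prime(t,\eta_t)\hat\s_2(t)$ by pulling $\psi_x^\prime$ out of the integral $\int_0^T\rho(t,s)D_s^H Y_s\,ds$, because $D_s^H Y_s=\psi_x^\prime(s,\eta_s)D_s^H\eta_s$ has the factor $\psi_x^\prime(s,\eta_s)$ evaluated at the running time $s$, not at $t$. The correct route is to interpret $\mathbb{D}_t^HY_t$ as the trace of the $\rho$-derivative process and to use the fact, implicit in the It\^o chain rule (Theorem~2.4) and in Hu's formalism, that for processes of the form $Y_s=\psi(s,\eta_s)$ the relevant ``diagonal'' value is $\psi_x^\prime(t,\eta_t)\int_0^t\rho(t,v)\s_2(v)\,dv$; I would invoke the differentiability hypotheses on $\s_2$ and on $\|\s_2\|_t$ (Remark~\ref{rm1}), together with Proposition~6.25 in \cite{Hu} and the integrability ensured by $\psi\in\mathscr{C}^{1,2}_{\mathrm{pol}}$ and $Y\in\tl{\cal V}_{[0,T]}$, to justify that the trace computation commutes with evaluation. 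Once this technical point is settled, the identities $D_tY_t=Z_{1,t}$ and $\mathbb{D}_t^HY_t=\frac{\hat\s_2(t)}{\s_2(t)}Z_{2,t}$ follow for a.e. $t\in[0,T]$.
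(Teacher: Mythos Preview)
Your approach is exactly the paper's: invoke Theorem~\ref{eth0} to write $Y_t=\psi(t,\eta_t)$ and then apply the Malliavin chain rule. However, the ``main obstacle'' you raise is not genuine and comes from a misreading of the definition of the $\rho$-derivative. By definition $\mathbb{D}_t^H F=\int_0^T\rho(t,s)\,D_s^H F\,ds$ for a \emph{fixed} random variable $F$; applied to the random variable $Y_t$ (with $t$ held fixed) this gives
\[
\mathbb{D}_t^H Y_t=\int_0^T\rho(t,s)\,D_s^H Y_t\,ds,
\]
not $\int_0^T\rho(t,s)\,D_s^H Y_s\,ds$ as you wrote. Since $D_s^H Y_t=\psi_x'(t,\eta_t)\,\s_2(s)\mathbf{1}_{[0,t]}(s)$, the factor $\psi_x'(t,\eta_t)$ does not depend on the integration variable $s$ and factors out immediately, yielding $\mathbb{D}_t^H Y_t=\psi_x'(t,\eta_t)\,\hat\s_2(t)$ in one step; this is precisely the paper's computation, and no commutation-of-trace argument is needed.

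Your sign concern, on the other hand, is a real inconsistency in the paper: Theorem~\ref{eth0} writes $Z_{1,t}=-\varphi_1(t,\eta_t)=-\s_1(t)\psi_x'(t,\eta_t)$, while the proof of the proposition silently uses $Z_{1,t}=\s_1(t)\psi_x'(t,\eta_t)$. The intended convention is the latter, so you should simply drop the minus signs rather than carry them through.
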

\begin{proof} 
Since $(Y_t, Z_{1,t}, Z_{2,t})$ satisfies the SFrBSDE \eqref{baw} then we  have  $Y = \psi(\cdot,\e)$  where\\ $\psi\in \mathscr{C}^{1, 2}([0, T]\times \R)$. From Theorem \ref{eth0}, we have
$$Z_{1,t}=\s_1(t)\psi_x^\prime(t,x), \quad Z_{2,t}=\s_2(t)\psi_x^\prime(t,x).$$
Then   we  can  write  $D_tY_t = \s_1(t)\psi_x^\prime(t,x)= Z_{1,t}$  and
\begin{align*}
\mathbb{D}_t^{H}Y_t =\int_{0}^{T} \phi(t,s)D_s^{H} \psi(t, \e_{t}) ds &=   \psi^{\prime}_{x} (t, \e_{t})  \int_{0}^{T} \phi(t,s) \s_2(s)ds \\ &=  \widehat\s_2 (t) \psi^{\prime}_{x} (t, \e_{t})= \frac{ \widehat\s_2(t)}{\s_2(t)}Z_{2,t}.
\end{align*}
\end{proof}

\subsection{An averaging principle}
In this section, we are going to investigate the averaging principle for the FrBSDEs under non Lipschitz coefficients. Let us consider the standard form of equation
(\ref{baw}):
\begin{equation}\label{per}
Y^{\varepsilon}_{t}=\xi+\varepsilon^{2H}\int_{t}^{T}f\left(r, \eta^{\varepsilon}_r,Y^{\varepsilon}_{r},Z^{\varepsilon}_{1,r},Z^{\varepsilon}_{2,r}\right)dr- \varepsilon^{H}\int_{t}^{T}Z^{\varepsilon}_{1,r}dB_{r} 
- \varepsilon^H\int_{t}^{T}Z^{\varepsilon}_{2,r}dB^H_{r}, \quad t\in [0, T];
\end{equation}
where  $\eta^{\varepsilon}_{t} = \eta_{0} + \varepsilon^{2H}\disp\int_0^t b(s)ds+ \varepsilon^{H}\int_0^t\sigma_1(s)dB_s + \varepsilon^{H}\int_0^t\sigma_2(s)dB_s^H, \quad t\in [0, T].$

According to the second part, equation (\ref{per}) also has an adapted unique and square integrable solution. We will examine whether the solution $Y^{\varepsilon}_{t}$ can be approximated to the solution process $\overline{Y}_{t}$ of the simplified equation:
\begin{equation}\label{noper}
\overline{Y}_{t}=\xi+\varepsilon^{2H}\int_{t}^{T}\overline{f}\left(\eta^{\varepsilon}_r,\overline{Y}_{r},\overline{Z}_{1,r},\overline{Z}_{2,r}\right)dr- \varepsilon^{H}\int_{t}^{T}\overline{Z}_{1,r}dB_{r}- \varepsilon^{H}\int_{t}^{T}\overline{Z}_{r}dB^H_{r}, \quad t\in [0, T];
\end{equation}
where    $\left(\overline{Y}_{t},\overline{Z}_{1,t},\overline{Z}_{2,t}\right)$ has the same properties as $\left(Y^{\varepsilon}_{t},Z^{\varepsilon}_{1,t},Z^{\varepsilon}_{2,t}\right)$.\\

We assume that the coefficients $f$ and $\overline{f}$ are continuous functions and satisfy the following assumption:
\begin{itemize}
\item[$\bullet\;$\bf (A1)] There exists $L>0$  such that, for all $\left(t,x,y,z_{1},z_{2},y',z'_{1},z'_{2}\right) \in \left[0,T\right]\times\R^7$, we have
	\begin{align*}
	\left|f(t,x,y,z_{1},z_{2})-f(t,x,y',z'_{1},z'_{2})\right|^{2}&\leq L\left(\left|y-y'\right|^{2}+\left|z_{1}-z'_{1}\right|^{2}+\left|z_{2}-z'_{2}\right|^{2}\right) 
	\end{align*}
\item[$\bullet\;$\bf (A2)] For any $t\in \left[0,T_{1}\right] \subset \left[0,T\right]$ and for all $\left(x,y,z_1,z_2\right)\in\R\times\R\times\R\times\R$, we have
$$\dfrac{1}{T_{1}-t}\int_{t}^{T_1}\left|f(s,x,y,z_1,z_2)-\overline{f}(x,y,z_1,z_2)\right|^{2}ds \leq \phi(T_{1}-t)\left(1+\left|y\right|^{2}+\left|z_1\right|^{2}+\left|z_2\right|^{2}\right) $$
where $\phi$ is a bounded function.
\end{itemize}

In what follows, we establish the result which will be useful in the sequel.
\begin{lemma}\label{1} Suppose that the original SFrBSDEs \eqref{per} and the averaged SFrBSDEs \eqref{noper}
both satisfy the assumptions $\bf(A1)$ and $\bf(A2)$. For a given arbitrarily small number $u\in\left[0,t\right]\subset\left[0,T\right]$, there exist $L_1>0$ and $C_2>0$ such that
\begin{equation}
	\E\left[\int_{u}^{T}\left[\left|Z^{\varepsilon}_{1,s}-\overline{Z}_{1,s}\right|^{2}+\left|Z^{\varepsilon}_{2,s}-\overline{Z}_{2,s}\right|^{2}\right]ds \right]\leq L_1\E\left[\int_{u}^{T}\left|Y^{\varepsilon}_{s}-\overline{Y}_{s}\right|^{2}ds\right] + C_2\left(T-u\right).
	\end{equation} 
\end{lemma}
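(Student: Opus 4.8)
The plan is to derive the estimate by applying the fractional It\^o chain rule (Theorem 1.4) to the product $(Y^\eps_t-\overline Y_t)^2$, i.e. to $X_t^2$ where $X_t:=Y^\eps_t-\overline Y_t$ solves the BSDE obtained by subtracting \eqref{noper} from \eqref{per}. Writing $\Delta Z_{1,s}:=Z^\eps_{1,s}-\overline Z_{1,s}$, $\Delta Z_{2,s}:=Z^\eps_{2,s}-\overline Z_{2,s}$ and $g(s):=f(s,\eta^\eps_s,Y^\eps_s,Z^\eps_{1,s},Z^\eps_{2,s})-\overline f(\eta^\eps_s,\overline Y_s,\overline Z_{1,s},\overline Z_{2,s})$, the dynamics of $X$ has drift $-\eps^{2H}g(s)$ and diffusion coefficients $-\eps^{H}\Delta Z_{1,s}$ (against $B$) and $-\eps^{H}\Delta Z_{2,s}$ (against $B^H$). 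Since $X_T=\xi-\xi=0$, integrating the It\^o formula from $u$ to $T$ and taking expectations kills the true stochastic integrals and the $X_u^2$ term appears with a sign that lets us drop it; what survives is, schematically,
\begin{align*}
\E\!\left[\eps^{2H}\!\int_u^T |\Delta Z_{1,s}|^2 ds + \eps^{2H}\!\int_u^T \Delta Z_{2,s}\,\mathbb{D}^H_s(\cdot)\,ds\right]
&= \E\!\left[2\eps^{2H}\!\int_u^T X_s\, g(s)\, ds\right] + (\text{Malliavin cross terms}).
\end{align*}

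Next I would handle the Malliavin terms using Proposition 1.3: for a solution of our SFrBSDE one has $D_s X_s=\Delta Z_{1,s}$ and $\mathbb{D}^H_s X_s=\tfrac{\hat\s_2(s)}{\s_2(s)}\Delta Z_{2,s}$, and by Remark 1.2(b) the ratio $\hat\s_2(s)/\s_2(s)$ is bounded below by $C_1>0$ on $[0,T]$. This is exactly what converts the $\int \Delta Z_{2,s}\,\mathbb{D}^H_s X_s\,ds$ contribution from the chain rule into a genuine coercive term $\gtrsim C_1\eps^{2H}\E\int_u^T|\Delta Z_{2,s}|^2 ds$ on the left-hand side, rather than something of indefinite sign. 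Together with the $B$-integral term this yields a lower bound of the form $c\,\eps^{2H}\E\int_u^T(|\Delta Z_{1,s}|^2+|\Delta Z_{2,s}|^2)\,ds$ on the left.

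For the right-hand side I would split $g(s)$ via the triangle inequality into $f(s,\eta^\eps_s,Y^\eps_s,Z^\eps_{1,s},Z^\eps_{2,s})-f(s,\eta^\eps_s,\overline Y_s,\overline Z_{1,s},\overline Z_{2,s})$, controlled by $\sqrt L$ times $(|X_s|+|\Delta Z_{1,s}|+|\Delta Z_{2,s}|)$ via (A1), plus $f(s,\eta^\eps_s,\overline Y_s,\overline Z_{1,s},\overline Z_{2,s})-\overline f(\eta^\eps_s,\overline Y_s,\overline Z_{1,s},\overline Z_{2,s})$, whose time-averaged square is bounded by (A2) by $\phi(\cdot)(1+|\overline Y_s|^2+|\overline Z_{1,s}|^2+|\overline Z_{2,s}|^2)$; since $\phi$ is bounded and $(\overline Y,\overline Z_1,\overline Z_2)\in\tl{\cal V}_{[0,T]}^3$ is square-integrable, the expected integral of this piece is bounded by a constant. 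Then Young's inequality $2X_s g(s)\le \kappa^{-1}X_s^2+\kappa g(s)^2$, with $\kappa$ chosen small enough that the $\kappa(|\Delta Z_{1,s}|^2+|\Delta Z_{2,s}|^2)$ terms coming from the (A1) part can be absorbed into the coercive left-hand side, leaves $\E\int_u^T(|\Delta Z_{1,s}|^2+|\Delta Z_{2,s}|^2)\,ds \le L_1\E\int_u^T|X_s|^2\,ds + C_2(T-u)$ after dividing through by $c\,\eps^{2H}$ (note $\eps^{2H}$ cancels since it multiplies every term, so $L_1,C_2$ are $\eps$-independent).

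The main obstacle I anticipate is the rigorous justification of the It\^o chain rule step: Theorem 1.4 is stated for processes whose Malliavin derivatives $D_t\alpha_i(s)$ and $\mathbb{D}^H_t\vartheta_i(s)$ are continuously differentiable in $(s,t)$, so one must argue that the solution components $\Delta Z_{1},\Delta Z_{2}$ (which by Theorem 1.5 are of the form $\s_i(t)\psi'_x(t,\eta^\eps_t)$ with $\psi\in\mathscr{C}^{1,2}_{\mathrm{pol}}$) meet these regularity requirements, and carefully track the $\tfrac12\frac{d}{ds}\|\cdot\|_s^2$-type bracket contributions — here the independence of $B$ and $B^H$ is what prevents any mixed $dB\,dB^H$ bracket. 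A secondary technical point is making sure the application of Proposition 1.3 is legitimate for the \emph{difference} of two solutions; since the proposition is linear in the solution and both $(Y^\eps,Z^\eps)$ and $(\overline Y,\overline Z)$ satisfy it with the same $\s_1,\s_2$, the identities for $X$ follow by subtraction, but this should be spelled out. Everything else is routine Cauchy--Schwarz/Young bookkeeping.
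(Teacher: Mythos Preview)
Your proposal is correct and follows essentially the same route as the paper: apply the It\^o chain rule to $|Y^\eps_t-\overline Y_t|^2$, invoke Proposition~\ref{pro} together with Remark~\ref{rm1}(b) to turn the Malliavin cross terms into a coercive lower bound $\gtrsim C_1\E\int_u^T|\Delta Z_{2,s}|^2\,ds$, split the drift via {\bf(A1)}/{\bf(A2)}, and choose the Young-inequality parameter so that the $|\Delta Z_{i,s}|^2$ contributions from the {\bf(A1)} piece are absorbed on the left. The two technical concerns you raise (regularity hypotheses for the chain rule, and extending Proposition~\ref{pro} to the difference by linearity) are exactly the points the paper glosses over, and your tracking of the $\eps^{2H}$ factors is if anything more careful than the paper's own.
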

\begin{proof}
Let us  define $\overline{\Delta\d}^{\varepsilon} = \d^{\varepsilon}-\overline{\d}$ for  a  process  $\d\in \{Y, Z_1,Z_2\}$.  

It is easily seen  that  the  pair  of processes  $\left(\overline{\Delta Y}^{\varepsilon}_t,\overline{\Delta Z}^{\varepsilon}_{1,t},\overline{\Delta Z}^{\varepsilon}_{2,t}\right)_{{0\le t \le T}}$ solves the SFrBSDE   
 \begin{eqnarray*}
\overline{\Delta Y}^{\varepsilon}_t &= \varepsilon^{2H}\!\int_t^T\left(f(s,\eta^{\varepsilon}_s, Y^{\varepsilon}_{s},Z^{\varepsilon}_{1,s},Z^{\varepsilon}_{2,s})-\overline{f}(\eta^{\varepsilon}_s,\overline{Y}_{s},\overline{Z}_{1,s},\overline{Z}_{2,s})\right) ds -\varepsilon^{H}\!\int_t^T\overline{\Delta Z}^{\varepsilon}_{1,s}dB_s \\ 
&- \varepsilon^{H}\!\int_t^T\overline{\Delta Z}^{\varepsilon}_{2,s}dB^H_s.
\end{eqnarray*} 

Applying It\^o's formula to $ \left|\overline{\Delta Y}^{\varepsilon}_t \right|^{2}$, we obtain
	\begin{align}\label{ito-1}
\left|\overline{\Delta Y}^{\varepsilon}_t \right|^{2}&+\varepsilon^H\int_{u}^{T}D_s\overline{\Delta Y}^{\varepsilon}_s\overline{\Delta Z}^{\varepsilon}_{1,s}ds
	+\varepsilon^H\int_{u}^{T}\mathbb{D}^H_s\overline{\Delta Y}^{\varepsilon}_s\overline{\Delta Z}^{\varepsilon}_{2,s}ds\nonumber\\
	=&2\varepsilon^{2H}\int_{u}^{T}\overline{\Delta Y}^{\varepsilon}_s\left(f(s,\eta^{\varepsilon}_s, Y^{\varepsilon}_{s},Z^{\varepsilon}_{1,s},Z^{\varepsilon}_{2,s})-\overline{f}(\eta^{\varepsilon}_s,\overline{Y}_{s},\overline{Z}_{1,s},\overline{Z}_{2,s})\right) ds\nonumber\\
&- 2\varepsilon^H\int_{u}^{T}\overline{\Delta Y}^{\varepsilon}_s\overline{\Delta Z}^{\varepsilon}_{1,s}dB_s
- 2\varepsilon^H\int_{u}^{T}\overline{\Delta Y}^{\varepsilon}_s\overline{\Delta Z}^{\varepsilon}_{2,s}dB^H_s
\end{align}	
Using  the  fact  that  $\left(\overline{\Delta Y}^{\varepsilon}_s,\overline{\Delta Z}^{\varepsilon}_{1,s},\overline{\Delta Z}^{\varepsilon}_{2,s}\right)_{t\le s \le T}  \in   \widetilde{\cal  V}_{[0, T]}\times  \widetilde{\cal  V}_{[0, T]}\times  \widetilde{\cal  V}_{[0, T]}$  and   $ {\cal  V}_{[0, T]} \subset  \mathbb{L}^{1,2}_{H}$ (see Lemma 8  in  \cite{Mat-Nie})   which  implies  in  fact $ F_{i,s}  = \overline{\Delta Y}^{\varepsilon}_s\overline{\Delta Z}^{\varepsilon}_{i,s}\in\mathbb{L}_{H}^{1,2},\,$ (where $i=1,2$). Then  by Theorem \ref{theo-1}, we   have  
 $$\E\left[\int_0^T\overline{\Delta Y}^{\varepsilon}_s\overline{\Delta Z}^{\varepsilon}_{1,s}d B_s+\int_0^T\overline{\Delta Y}^{\varepsilon}_s\overline{\Delta Z}^{\varepsilon}_{2,s}d B^H_s\right]= 0$$
Hence   we deduce from  \eqref{ito-1}
\begin{align}\label{H1}
\E\!\left[\left|\overline{\Delta Y}^{\varepsilon}_t \right|^{2}\right]&+\varepsilon^H\E\!\left[\int_{u}^{T}D_s\overline{\Delta Y}^{\varepsilon}_s\overline{\Delta Z}^{\varepsilon}_{1,s}ds\right]
	+\varepsilon^H\E\!\left[\int_{u}^{T}\mathbb{D}^H_s\overline{\Delta Y}^{\varepsilon}_s\overline{\Delta Z}^{\varepsilon}_{2,s}ds\right]\nonumber\\
	&=2\varepsilon^{2H}\E\left[\int_{u}^{T}\overline{\Delta Y}^{\varepsilon}_s\left(f(s,\eta^{\varepsilon}_s, Y^{\varepsilon}_{s},Z^{\varepsilon}_{1,s},Z^{\varepsilon}_{2,s})-\overline{f}(\eta^{\varepsilon}_s,\overline{Y}_{s},\overline{Z}_{1,s},\overline{Z}_{2,s})\right)ds\right] \nonumber\\
&\leq 2\varepsilon^{2H}\E\left[\int_{u}^{T}\overline{\Delta Y}^{\varepsilon}_s\left(f(s,\eta^{\varepsilon}_s, Y^{\varepsilon}_{s},Z^{\varepsilon}_{1,s},Z^{\varepsilon}_{2,s})-f(s,\eta^{\varepsilon}_s,\overline{Y}_{s},\overline{Z}_{1,s},\overline{Z}_{2,s})\right)ds\right] \nonumber\\
&\quad+ 2\varepsilon^{2H}\E\left[\int_{u}^{T}\overline{\Delta Y}^{\varepsilon}_s\left(f(s,\eta^{\varepsilon}_s,\overline{Y}_{s},\overline{Z}_{1,s},\overline{Z}_{2,s})-\overline{f}(\eta^{\varepsilon}_s,\overline{Y}_{s},\overline{Z}_{1,s},\overline{Z}_{2,s})\right)ds\right]\\
	&= E_{1}+E_{2} \nonumber,
\end{align}	
where $E_{1}=\disp 2\varepsilon^{2H}\E\left[\int_{u}^{T}\overline{\Delta Y}^{\varepsilon}_s\left(f(s,\eta^{\varepsilon}_s, Y^{\varepsilon}_{s}, Z^{\varepsilon}_{1,s}, Z^{\varepsilon}_{2,s})-f(s,\eta^{\varepsilon}_s,\overline{Y}_{s},\overline{Z}_{1,s},\overline{Z}_{2,s})\right)ds\right] $ \\and
$E_{2}=2\varepsilon^{2H}\E\left[\int_{u}^{T}\overline{\Delta Y}^{\varepsilon}_s\left(f(s,\eta^{\varepsilon}_s, \overline{Y}_{s},\overline{Z}_{1,s},\overline{Z}_{2,s})-\overline{f}(\eta^{\varepsilon}_s,\overline{Y}_{s},\overline{Z}_{1,s},\overline{Z}_{2,s})\right)ds\right] $

For $E_{1}$, by using the condition $(\bf A1)$ and Holder's inequality, for any $\alpha > 0$,\\ $2ab\leq\alpha a^{2} + b^{2}/\alpha$, we deduce that
	\begin{align}\label{H2}
	E_{1} \leq &\alpha\varepsilon^{2H}\E\left[\int_{u}^{T}\left|\overline{\Delta Y}^{\varepsilon}_s\right|^{2}ds\right]+\dfrac{\varepsilon^{2H}}{\alpha} \E\left[\int_{u}^{T}\left|f(s,\eta^{\varepsilon}_s, Y^{\varepsilon}_{s},Z^{\varepsilon}_{1,s},Z^{\varepsilon}_{2,s})-f(s,\eta^{\varepsilon}_s,\overline{Y}_{s},\overline{Z}_{1,s},\overline{Z}_{2,s})\right|^{2}ds\right] \nonumber\\
\leq &\varepsilon^{2H}\left(\alpha+\dfrac{L}{\alpha}\right)\E\left[\int_{u}^{T}\left|\overline{\Delta Y}^{\varepsilon}_s\right|^{2}ds\right] +\dfrac{L\varepsilon^{2H}}{\alpha} \E\left[\int_{u}^{T}\left[\left|\overline{\Delta Z}^{\varepsilon}_{1,s}\right|^{2}+\left|\overline{\Delta Z}^{\varepsilon}_{2,s}\right|^{2}\right]ds\right]\nonumber\\
\end{align}
	
For $E_{2}$, by using  assumption $\bf (A2)$,  Holder's inequality and Young's inequality, we have
\begin{align}\label{H3}
&E_{2}\leq 2\varepsilon^{2H}\E\left[\!\left(\int_{u}^{T}\left|\overline{\Delta Y}^{\varepsilon}_s\right|^{2}ds\!\right)^{\frac{1}{2}} \left(\int_{t}^{T}\left|f(s,\eta^{\varepsilon}_s,\overline{Y}_{s},\overline{Z}_{1,s},\overline{Z}_{2,s})-\overline{f}(\eta^{\varepsilon}_s,\overline{Y}_{s},\overline{Z}_{1,s},\overline{Z}_{2,s})\right|^{2}ds\!\right)^{\frac{1}{2}}\!\right]\nonumber\\
\leq& 2\varepsilon^{2H}\E\left[\!\left(\left(T\!-\!u\right)\!\int_{u}^{T}\left|\overline{\Delta Y}^{\varepsilon}_s\right|^{2}ds\!\right)^{\frac{1}{2}}\left(\dfrac{1}{T\!-\!u}\!\int_{u}^{T}\left|f(s,\eta^{\varepsilon}_s,\overline{Y}_{s},\overline{Z}_{1,s},\overline{Z}_{2,s})-\overline{f}(\eta^{\varepsilon}_s,\overline{Y}_{s},\overline{Z}_{1,s},\overline{Z}_{2,s})\right|^{2}ds\!\right)^{\frac{1}{2}}\!\right] \nonumber\\
&\leq  2\varepsilon^{2H}C_2\E\left[\!\left(\!\int_{u}^{T}\left|\overline{\Delta Y}^{\varepsilon}_s\right|^{2}ds\!\right)^{\frac{1}{2}}\right]\nonumber\\
&\leq  \varepsilon^{2H}C_2\E\left[\int_{u}^{T}\left|\overline{\Delta Y}^{\varepsilon}_s\right|^{2}ds+ T-u\right]\nonumber\\
&\leq  \varepsilon^{2H}C_2\E\left[\int_{u}^{T}\left|\overline{\Delta Y}^{\varepsilon}_s\right|^{2}ds\right] + \varepsilon^{2H}C_2\left(T-u\right);
\end{align}
where  $C_2=\!\disp\sqrt{\left(T\!-\!u\right)\sup_{u\leq s \leq T}\!\phi(s\!-\!u)\left[1+\sup_{u\leq s \leq T}\E(\left|\overline{Y}_{s}\right|^{2})+\sup_{u\leq s \leq T}\!\E(\left|\overline{Z}_{1,s}\right|^{2})+\sup_{u\leq s \leq T}\!\E(\left|\overline{Z}_{2,s}\right|^{2})\right]}.$\\

By the stochastic representation given in Proposition \ref{pro} and the Remark \ref{rm1}, we have
$$\E\!\left[\int_{u}^{T}\!D_s\overline{\Delta Y}^{\varepsilon}_s\overline{\Delta Z}^{\varepsilon}_{1,s}\!ds\right]\!=\!\E\!\left[\int_{u}^{T}\!\left|\overline{\Delta Z}^{\varepsilon}_{1,s}\right|^2\!ds\right]\;\text{and}\; \E\!\left[\!\int_{u}^{T}\!\mathbb{D}^H_s\overline{\Delta Y}^{\varepsilon}_s\overline{\Delta Z}^{\varepsilon}_{2,s}\!ds\right]\!\geq\! C_1\E\!\left[\int_{u}^{T}\!\left|\overline{\Delta Z}^{\varepsilon}_{2,s}\right|^2\!ds\right]$$

Putting pieces together, we deduce from \eqref{H1} that
\begin{align}\label{HH4}
\E\!\left[\left|\overline{\Delta Y}^{\varepsilon}_t \right|^{2}\right]&+\varepsilon^H\E\!\left[\!\int_{u}^{T}\left|\overline{\Delta Z}^{\varepsilon}_{1,s}\right|^2ds\right]	+C_1\varepsilon^H\E\!\left[\!\int_{u}^{T}\left|\overline{\Delta Z}^{\varepsilon}_{2,s}\right|^2ds\right]\nonumber\\
&\leq \varepsilon^{2H}\left(\alpha+\dfrac{L}{\alpha}+C_2\right)\E\left[\int_{u}^{T}\left|\overline{\Delta Y}^{\varepsilon}_s\right|^{2}ds\right]+ \varepsilon^{2H}C_2\left(T-u\right) \nonumber\\
 &\quad+\dfrac{L\varepsilon^{2H}}{\alpha} \E\left[\int_{u}^{T}\left[\left|\overline{\Delta Z}^{\varepsilon}_{1,s}\right|^{2}+\left|\overline{\Delta Z}^{\varepsilon}_{2,s}\right|^{2}\right]ds\right]
\end{align}

Hene if we choose $\alpha\!=\!\alpha_0$ satisfying $\disp \frac{\varepsilon^H}{\alpha_0}\min\left\{\alpha_0-L\varepsilon^{H}, \alpha_0C_1-L\varepsilon^H\right\}=\varepsilon^{2H}$, then we obtain

\begin{align*}
\varepsilon^{2H}\E\!\left[\int_{u}^{T}\left[\left|\overline{\Delta Z}^{\varepsilon}_{1,s}\right|^2+\left|\overline{\Delta Z}^{\varepsilon}_{2,s}\right|^2\right]ds\right]\!&\leq \varepsilon^{2H}\left(\alpha_0+\dfrac{L}{\alpha_0}+C_2\right)\E\left[\int_{u}^{T}\left|\overline{\Delta Y}^{\varepsilon}_s\right|^{2}ds\right] + \varepsilon^{2H}C_2\left(T\!-\!u\right) 
\end{align*}	

Thus,
\begin{align*}
\E\!\left[\int_u^T\left[\left|Z^{\varepsilon}_{1,s}-\overline{Z}_{1,s}\right|^2+\left|Z^{\varepsilon}_{2,s}-\overline{Z}_{2,s}\right|^2\right]ds\right]\leq L_1 \E\int_u^T\left|Y^{\varepsilon}_{s}-\overline{Y}_{s}\right|^{2}ds+ C_2(T-u),
\end{align*}
where $L_1= \alpha_0+\dfrac{L}{\alpha_0}+C_2$.  This completes the proof.
\end{proof}

Now, we claim the main theorem showing the relationship between solution processes $Y^{\varepsilon}_{t}$ to the original \eqref{per} and $\overline{Y}_{t}$ to the averaged \eqref{noper}.  It shows that the solution of the averaged \eqref{noper}  converges to that of the original \eqref{per}  in mean square sense.
\begin{theorem}\label{t1}
Under the assumption of Lemma \ref{1} are satisfied. For a given arbitrarily small number $\delta_{1}>0$, there exists $\varepsilon_{1}\in \left[0,\varepsilon_{0}\right]$ and $\beta \in \left[0,1\right]$ such that for all $\varepsilon\in \left[0,\varepsilon_{1}\right]$ having
\begin{displaymath}
\sup_{T\varepsilon^{1-\beta}\leq t \leq T}\E\left|Y^{\varepsilon}_{t}-\overline{Y}_{t}\right|^{2} \leq \delta_{1} . 
\end{displaymath}
\end{theorem}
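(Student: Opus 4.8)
The plan is to extract from the energy identity underlying Lemma~\ref{1} a closed Gronwall‑type inequality for $g(t):=\E|Y^{\varepsilon}_{t}-\overline Y_{t}|^{2}$ and then to let the prefactor $\varepsilon^{2H}$ in front of the driver do the work.

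I would begin from the inequality \eqref{HH4}, which is produced in the proof of Lemma~\ref{1} by applying the fractional It\^o formula to $|\overline{\Delta Y}^{\varepsilon}_{t}|^{2}$ on $[t,T]$, using $\overline{\Delta Y}^{\varepsilon}_{T}=0$, the vanishing of the expectations of the Skorohod integrals against $B$ and $B^{H}$ (legitimate since $\overline{\Delta Y}^{\varepsilon}\overline{\Delta Z}^{\varepsilon}_{i}\in\mathbb{L}^{1,2}_{H}$ and by Theorem~\ref{theo-1}), the identities $D_{s}\overline{\Delta Y}^{\varepsilon}_{s}=\overline{\Delta Z}^{\varepsilon}_{1,s}$ and $\mathbb{D}^{H}_{s}\overline{\Delta Y}^{\varepsilon}_{s}=\bigl(\hat\s_{2}(s)/\s_{2}(s)\bigr)\overline{\Delta Z}^{\varepsilon}_{2,s}$ of Proposition~\ref{pro}, and the bound $\hat\s_{2}/\s_{2}\ge C_{1}$ of Remark~\ref{rm1}. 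In \eqref{HH4} the two nonnegative terms $\varepsilon^{H}\E\int_{u}^{T}|\overline{\Delta Z}^{\varepsilon}_{1,s}|^{2}ds$ and $C_{1}\varepsilon^{H}\E\int_{u}^{T}|\overline{\Delta Z}^{\varepsilon}_{2,s}|^{2}ds$ sit on the left while $(L\varepsilon^{2H}/\alpha)\E\int_{u}^{T}\bigl(|\overline{\Delta Z}^{\varepsilon}_{1,s}|^{2}+|\overline{\Delta Z}^{\varepsilon}_{2,s}|^{2}\bigr)ds$ sits on the right. Fixing any $\alpha>0$ and restricting to $\varepsilon$ with $\alpha\ge L\varepsilon^{H}\max\{1,1/C_{1}\}$, the latter is absorbed by the former, and \eqref{HH4} reduces to
$$g(t)\ \le\ K\varepsilon^{2H}\,\E\!\int_{u}^{T}g(s)\,ds\ +\ C_{2}\varepsilon^{2H}(T-u),\qquad K:=\alpha+\frac{L}{\alpha}+C_{2},$$
valid for every $t\in[u,T]$. (Alternatively one bounds the right‑hand $Z$‑terms of \eqref{HH4} directly by Lemma~\ref{1}, with the same outcome.)

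Since the right‑hand side above is independent of $t$, integrating in $t$ over $[u,T]$ gives $\bigl(1-K\varepsilon^{2H}(T-u)\bigr)\E\int_{u}^{T}g(s)\,ds\le C_{2}\varepsilon^{2H}(T-u)^{2}$; for $\varepsilon$ small enough that $K\varepsilon^{2H}T\le\frac{1}{2}$ this yields $\E\int_{u}^{T}g(s)\,ds\le 2C_{2}\varepsilon^{2H}(T-u)^{2}$, and feeding it back into the displayed inequality gives, for all $t\in[u,T]$ and all $\varepsilon$ small enough,
$$\sup_{u\le t\le T}\E|Y^{\varepsilon}_{t}-\overline Y_{t}|^{2}\ \le\ C_{2}\varepsilon^{2H}(T-u)\bigl(1+2KC_{2}\varepsilon^{2H}(T-u)\bigr)\ \le\ 2C_{2}\varepsilon^{2H}(T-u).$$
I would then take $\beta=\frac{1}{2}$ (any $\beta\in[0,1)$ works equally well) and put $u=T\varepsilon^{1-\beta}$, so $0\le T-u\le T$; moreover the constant $C_{2}=C_{2}(u)$, although it depends on $u$ through $T-u$ and through suprema over $[u,T]$, is dominated for every $\varepsilon\in(0,\varepsilon_{0}]$ by the $\varepsilon$‑independent constant $C_{2}^{\ast}:=\bigl(T\,\|\phi\|_{\infty}\bigl(1+\sup_{0\le s\le T}\E|\overline Y_{s}|^{2}+\sup_{0\le s\le T}\E|\overline Z_{1,s}|^{2}+\sup_{0\le s\le T}\E|\overline Z_{2,s}|^{2}\bigr)\bigr)^{1/2}$, which is finite since $\phi$ is bounded and $(\overline Y,\overline Z_{1},\overline Z_{2})\in\widetilde{\cal V}_{[0,T]}\times\widetilde{\cal V}_{[0,T]}\times\widetilde{\cal V}_{[0,T]}$ has finite second moments. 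Hence $\sup_{T\varepsilon^{1-\beta}\le t\le T}\E|Y^{\varepsilon}_{t}-\overline Y_{t}|^{2}\le 2TC_{2}^{\ast}\varepsilon^{2H}$, and it suffices to fix $\varepsilon_{1}\in(0,\varepsilon_{0}]$ small enough that the finitely many smallness conditions invoked above hold (namely $L\varepsilon_{1}^{H}\max\{1,1/C_{1}\}\le\alpha$, $K\varepsilon_{1}^{2H}T\le\frac{1}{2}$, $2KC_{2}^{\ast}\varepsilon_{1}^{2H}T\le1$) and, in addition, $2TC_{2}^{\ast}\varepsilon_{1}^{2H}\le\delta_{1}$; each being of the form $\varepsilon_{1}^{\text{(positive power)}}\le\text{const}$, such an $\varepsilon_{1}$ exists, and then $\sup_{T\varepsilon^{1-\beta}\le t\le T}\E|Y^{\varepsilon}_{t}-\overline Y_{t}|^{2}\le\delta_{1}$ for all $\varepsilon\in[0,\varepsilon_{1}]$.

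Conceptually the argument is short; the work is bookkeeping. The only genuinely substantive point is the uniform‑in‑$\varepsilon$ bound on $C_{2}(u)$ as $u=T\varepsilon^{1-\beta}\to0$, which is exactly where the boundedness of $\phi$ in \textbf{(A2)} and the a priori square‑integrability of $(\overline Y,\overline Z_{1},\overline Z_{2})$ enter; everything else (absorbing the $Z$‑terms, keeping $1-K\varepsilon^{2H}(T-u)$ bounded below, matching the final threshold) is routine, and the delicate stochastic‑calculus steps — applicability of the fractional It\^o formula and vanishing of the Skorohod‑integral expectations — have already been carried out inside the proof of Lemma~\ref{1}.
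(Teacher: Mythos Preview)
Your argument is correct and takes a genuinely different, cleaner route than the paper's. The paper does not recycle \eqref{HH4}; instead it squares the equation for $\overline{\Delta Y}^{\varepsilon}$ directly, splits the right-hand side into four pieces $I_{1},\dots,I_{4}$, and estimates the stochastic-integral terms $I_{3},I_{4}$ via Lemma~\ref{var} (the second-moment bound for fractional integrals). This leaves residual $\overline{\Delta Z}^{\varepsilon}$-integrals on the right, which the paper then bounds by invoking Lemma~\ref{1} itself, and closes with Gronwall. Your approach bypasses Lemma~\ref{var} entirely: by staying inside the energy identity you keep the $\overline{\Delta Z}^{\varepsilon}$-terms on the left with the favorable sign and absorb the right-hand ones for small $\varepsilon$, arriving directly at a scalar inequality in $g$ alone, which you solve by a one-step integration rather than Gronwall. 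The payoff is a sharper and more transparent final estimate $\sup_{u\le t\le T}\E|Y^{\varepsilon}_{t}-\overline Y_{t}|^{2}=O(\varepsilon^{2H})$ with explicit constants, whereas the paper ends with $C_{4}\varepsilon^{1-2H\beta}$ where $C_{4}$ itself still carries powers of $\varepsilon$. Your explicit uniform bound $C_{2}(u)\le C_{2}^{\ast}$ as $u=T\varepsilon^{1-\beta}\to0$ is also a point the paper leaves implicit. One harmless slip: in your displayed bound the bracket should read $1+2K\varepsilon^{2H}(T-u)$ rather than $1+2KC_{2}\varepsilon^{2H}(T-u)$; the conclusion is unaffected.
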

\begin{proof}
	With the help of Lemma \ref{1}, now we can prove the Theorem \ref{t1}. Using the elementary inequelity and the isometry property, we derive that
	\begin{align}\label{ARAKHIM1}
\E\left[\left|\overline{\Delta Y}^{\varepsilon}_s\right|^{2}\right]\leq&2\varepsilon^{4H}\E\left[\left|\int_{u}^{T}\left[f(s,\eta^{\varepsilon}_{s},Y^{\varepsilon}_{s},Z^{\varepsilon}_{1,s},Z^{\varepsilon}_{2,s})\!-\!\overline{f}(\eta^{\varepsilon}_{s},\overline{Y}_{s},\overline{Z}_{1,s},\overline{Z}_{2,s})\right]ds\right|^{2}\right]\nonumber\\
&+2\E\left[\left|\varepsilon^{H}\!\int_{u}^{T}\overline{\Delta Z}^{\varepsilon}_{1,s}dB_{s}+ \varepsilon^{H}\!\int_{u}^{T}\overline{\Delta Z}^{\varepsilon}_{2,s}dB^H_{s}\right|^{2}\right]\nonumber\\
	&\leq 4\varepsilon^{4H}\E\left[\left|\int_{u}^{T}\left[f(s,\eta^{\varepsilon}_{s},Y^{\varepsilon}_{s},Z^{\varepsilon}_{1,s},Z^{\varepsilon}_{2,s})-f(s,\eta^{\varepsilon}_{s},\overline{Y}_{s},\overline{Z}_{1,s},\overline{Z}_{2,s})\right]ds\right|^{2}\right]\nonumber\\
&+4\varepsilon^{4H}\E\left[\left|\int_{u}^{T}\left[f(s,\eta^{\varepsilon}_{s},\overline{Y}_{s},\overline{Z}_{1,s},\overline{Z}_{2,s})-\overline{f}(\eta^{\varepsilon}_{s},\overline{Y}_{s},\overline{Z}_{1,s},\overline{Z}_{2,s})\right]ds\right|^{2}\right]\nonumber\\
&+4\varepsilon^{4H}\E\left[\left|\int_{u}^{T}\overline{\Delta Z}^{\varepsilon}_{1,s}dB_{s}\right|^{2}\right] +4\varepsilon^{4H}\E\left[\left|\int_{u}^{T}\overline{\Delta Z}^{\varepsilon}_{2,s}dB^H_{s}\right|^{2}\right]\nonumber\\
&=I_{1}+I_{2}+I_{3}+I_4
	\end{align}
Applying Holder's inequality and the assumption $\bf (A1)$, we obtain
\begin{align}\label{ARAKHIM2}
I_{1}&\leq  4(T-u)\varepsilon^{4H}\E\left[\int_{u}^{T}\left|f(s,\eta^{\varepsilon}_{s},Y^{\varepsilon}_{s},Z^{\varepsilon}_{1,s},Z^{\varepsilon}_{2,s})-f(s,\eta^{\varepsilon}_{s},\overline{Y}_{s},\overline{Z}_{1,s},\overline{Z}_{2,s})\right|^{2}ds\right] \nonumber\\
&\leq  4(T-u)L\varepsilon^{4H}\E\left[\int_{u}^{T}\left[\left|\overline{\Delta Y}^{\varepsilon}_s\right|^{2}+ \left|\overline{\Delta Z}^{\varepsilon}_{1,s}\right|^{2}+\left|\overline{\Delta Z}^{\varepsilon}_{2,s}\right|^{2}\right]ds\right]
\end{align}

Then, together with Holder's inequality and the assumption $\bf (A2)$, we get
\begin{align}\label{ARAKHIM3}
I_{2}\leq& 4(T-u)\varepsilon^{4H}\E\left[\int_{u}^{T}\left|f(s,\eta^{\varepsilon}_{s},\overline{Y}_{s}, \overline{Z}_{1,s}, \overline{Z}_{2,s}) -\overline{f}(\eta^{\varepsilon}_{s},\overline{Y}_{s},\overline{Z}_{1,s},\overline{Z}_{2,s})\right|^{2}ds\right]\nonumber\\
\leq & 4(T-u)^{2}\varepsilon^{4H}\E\left[\dfrac{1}{T-u}\int_{u}^{T}\left|f(s,\eta^{\varepsilon}_{s},\overline{Y}_{s},\overline{Z}_{1,s},\overline{Z}_{2,s}) -\overline{f}(\eta^{\varepsilon}_{s},\overline{Y}_{s},\overline{Z}_{1,s},\overline{Z}_{2,s})\right|^{2}ds\right]\nonumber\\
\leq &C_{3}(T-u)^{2}\varepsilon^{4H},
\end{align}
where $C_{3}=4\disp\sup_{u\leq s\leq T}\left[\phi(s\!-\!u)\right]\left(1+\sup_{u\leq s \leq T}\E\left(\left|\overline{Y}_{s}\right|^2\right)+\sup_{u\leq s \leq T}\E\left(\left|\overline{Z}_{1,s}\right|^2\right)+\sup_{u\leq s \leq T}\E\left(\left|\overline{Z}_{2,s}\right|^2\right)\right)$.\\

By the Lemma \ref{var}, we obtain
\begin{align}\label{18}
I_{3}+I_4&\leq2\varepsilon^{2H}HT^{2H-1}\E\left[\int_{u}^{T}\left[\left|\overline{\Delta Z}^{\varepsilon}_{1,s}\right|^{2}+\left|\overline{\Delta Z}^{\varepsilon}_{2,s}\right|^{2}\right]ds\right] + 4\varepsilon^{2H}C_0T^2.
\end{align}

Using above inequalities, from \eqref{ARAKHIM1}, we deduce
\begin{align}
\sup_{u\leq t \leq T}\E\left[\left|\overline{\Delta Y}^{\varepsilon}_t\right|^{2}\right]\leq & \left(4(T-u)L\varepsilon^{4H}+2\varepsilon^{2H}HT^{2H-1}\right) \sup_{u\leq t \leq T}\E\left[\int_{u}^{T}\left[\left|\overline{\Delta Z}^{\varepsilon}_{1,s}\right|^{2}+\left|\overline{\Delta Z}^{\varepsilon}_{2,s}\right|^{2}\right]ds \right] \nonumber\\
&+4(T\!-\!u)L\varepsilon^{4H}\sup_{u\leq t \leq T}\E\int_{u}^{T}\left|\overline{\Delta Y}^{\varepsilon}_s\right|^{2}ds +C_{3}(T-u)^{2}\varepsilon^{4H}+ 4\varepsilon^{2H}C_0T^2 \nonumber
\end{align}
Applying Lemma \ref{1}  to the above inequality we get
\begin{align}
\sup_{u\leq t \leq T}&\E\left[\left|\overline{\Delta Y}^{\varepsilon}_t\right|^{2}\right]\leq  \left[4(T\!-\!u)L\varepsilon^{4H}\left(L_1+1\right)+2L_1\varepsilon^{2H}HT^{2H-1}\right]\int_{u}^{T}\sup_{u\leq s_{1} \leq s}\E\left|\overline{\Delta Y}^{\varepsilon}_{s_1}\right|^{2}ds\nonumber\\
&+\varepsilon^{2H}\left[\left(4(T\!-\!u)L\varepsilon^{2H}+2HT^{2H-1}\right)C_2(T\!-\!u) + C_{3}(T\!-\!u)^{2}\varepsilon^{2H}+ 4C_0T^2\right].
\end{align}

Thanks to Gronwall's inequality, we obtain
\begin{align*}
\sup_{u\leq t \leq T}\!\E\left|\overline{\Delta Y}^{\varepsilon}_{t}\right|^{2}\leq& \varepsilon^{2H}\left[\left(4(T\!-\!u)L\varepsilon^{2H}+2HT^{2H-1}\right)C_2(T\!-\!u) + C_{3}(T\!-\!u)^{2}\varepsilon^{2H}+ 4C_0T^2\right]\\
&\quad\times e^{(T\!-\!u)\left[4(T\!-\!u)L\varepsilon^{4H}\left(L_1+1\right)+2L_1\varepsilon^{2H}HT^{2H-1}\right]}.
\end{align*}
Obviously, the above estimate implies that there exist $\beta\in\left[0,1\right]$  and $K > 0$ such that for evry $t\in(0, K\varepsilon^{-2H\beta}]\subseteq[0,T]$,
\begin{equation}
\sup_{K\varepsilon^{1-\beta}\leq t \leq T}\E\left|Y^{\varepsilon}_{t}-\overline{Y}_{t}\right|^{2}\leq C_{4}\varepsilon^{1-2H\beta},
\end{equation}
in which 
\begin{align*}
C_{4}&=\left[\left(4(T\!-\!K\varepsilon^{-2H\beta})L\varepsilon^{2H}+2HT^{2H-1}\right)C_2(T\!-\!K\varepsilon^{-2H\beta}) + C_{3}(T\!-\!K\varepsilon^{-2H\beta})^{2}\varepsilon^{2H}+ 4C_0T^2\right]\\
&\qquad\times\varepsilon^{2H(1+\beta)-1} e^{(T\!-\!K\varepsilon^{-2H\beta})\left[4(T\!-\!K\varepsilon^{-2H\beta})L\varepsilon^{4H}\left(L_1+1\right)+2L_1\varepsilon^{2H}HT^{2H-1}\right]}
\end{align*}
is constant. 

Consequently, for any number $\delta_{1}>0$, we can choose $\varepsilon_{1}\in \left[0,\varepsilon_{0}\right]$ such that  for every $\varepsilon_{1}\in \left[0,\varepsilon_{0}\right]$ and for each   $t\in(0, K\varepsilon^{-2H\beta}]$  
\begin{equation}
\sup_{K\varepsilon^{-2H\beta}\leq t \leq T}\E\left|Y^{\varepsilon}_{t}-\overline{Y}_{t}\right|^{2}\leq \delta_{1}.
\end{equation}
This completes the proof.
\end{proof}

With Theorem \ref{t1}, it is easy to show the convergence in probability between solution processes $Y^{\varepsilon}_{t}$ to the original \eqref{per} and $\overline{Y}_{t}$ to the averaged \eqref{noper}.
\begin{corollary}\label{c1}
Let the  assumptions $\bf(A1)$ and $\bf(A2)$ hold. For a given arbitary small number $\delta_{2}>0$, there exists $\varepsilon_{2}\in[0,\varepsilon_{0}]$ such that for all $\varepsilon\in(0,\varepsilon_{2}]$, we have
\begin{equation}
\lim_{\varepsilon\to 0}\mathbb{P}\left( \sup_{K\varepsilon^{1-\beta}\leq t \leq T}\left|Y^{\varepsilon}_{t}-\overline{Y}_{t}\right|> \delta_{2} \right)=0
\end{equation}
where $\beta$  defined by Theorem \ref{t1} such that $\beta < \frac{1}{2H}$.
\end{corollary}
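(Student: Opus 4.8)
The plan is to deduce Corollary \ref{c1} from the mean-square estimate in Theorem \ref{t1} by a direct application of Chebyshev's inequality (which, as the keyword list suggests, is the intended tool). First I would recall the quantitative bound obtained in the course of proving Theorem \ref{t1}: there is a constant $C_4>0$ and an exponent $\beta\in[0,1]$ such that
\begin{equation*}
\sup_{K\varepsilon^{1-\beta}\leq t\leq T}\E\left|Y^{\varepsilon}_{t}-\overline{Y}_{t}\right|^{2}\leq C_{4}\,\varepsilon^{1-2H\beta}.
\end{equation*}
The key observation is that if we impose the extra restriction $\beta<\frac{1}{2H}$, then the exponent $1-2H\beta$ is strictly positive, so $C_4\varepsilon^{1-2H\beta}\to 0$ as $\varepsilon\to 0$.

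Next I would apply Chebyshev's inequality. For the fixed threshold $\delta_2>0$ and each $t\in[K\varepsilon^{1-\beta},T]$,
\begin{equation*}
\mathbb{P}\left(\left|Y^{\varepsilon}_{t}-\overline{Y}_{t}\right|>\delta_{2}\right)\leq \frac{1}{\delta_2^{2}}\,\E\left|Y^{\varepsilon}_{t}-\overline{Y}_{t}\right|^{2}\leq \frac{C_4}{\delta_2^{2}}\,\varepsilon^{1-2H\beta}.
\end{equation*}
To pass from the pointwise-in-$t$ statement to the statement involving $\sup_{K\varepsilon^{1-\beta}\leq t\leq T}$ inside the probability, I would note that the right-hand side above is uniform in $t$; taking the supremum over $t$ first on the left before applying Chebyshev requires a little care, but one can either invoke the continuity of the sample paths $t\mapsto Y^{\varepsilon}_t-\overline{Y}_t$ (so that the supremum over the compact interval is attained and measurable) together with a reduction to a countable dense set, or simply observe that the bound $\frac{C_4}{\delta_2^2}\varepsilon^{1-2H\beta}$ controls $\E\big[\sup_t|Y^\varepsilon_t-\overline Y_t|^2\big]$ since the constant $C_4$ in Theorem \ref{t1} already dominates the supremum of the second moments over that range. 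Combining, we obtain
\begin{equation*}
\mathbb{P}\left(\sup_{K\varepsilon^{1-\beta}\leq t\leq T}\left|Y^{\varepsilon}_{t}-\overline{Y}_{t}\right|>\delta_{2}\right)\leq \frac{C_4}{\delta_2^{2}}\,\varepsilon^{1-2H\beta}.
\end{equation*}

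Finally, letting $\varepsilon\to 0$ and using $1-2H\beta>0$ gives the right-hand side tending to $0$, which is exactly the claimed convergence in probability; the choice of $\varepsilon_2\in[0,\varepsilon_0]$ is then made so that the bound is below any prescribed tolerance for all $\varepsilon\in(0,\varepsilon_2]$. The main (and really only) delicate point is the interchange of the supremum over $t$ with the probability/expectation, i.e. justifying that the uniform-in-$t$ second-moment bound of Theorem \ref{t1} yields a bound on the probability of the supremum; this is handled by the path continuity of the solution processes, which holds since $Y^\varepsilon$ and $\overline Y$ are of the form $\psi(\cdot,\eta^\varepsilon)$ with $\psi\in\mathscr{C}^{1,2}$ and $\eta^\varepsilon$ has continuous paths. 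Everything else is a one-line application of Chebyshev's inequality.
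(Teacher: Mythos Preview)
Your proof follows essentially the same route as the paper's: apply Chebyshev's inequality to the quantitative bound $\sup_t \E|Y^\varepsilon_t-\overline Y_t|^2\le C_4\varepsilon^{1-2H\beta}$ obtained in the proof of Theorem \ref{t1}, and then let $\varepsilon\to 0$ using $1-2H\beta>0$. The paper's own argument is a one-line computation that passes directly from $\mathbb{P}(\sup_t|\cdot|>\delta_2)$ to $\frac{1}{\delta_2^2}\E[\sup_t|\cdot|^2]\le \frac{C_4}{\delta_2^2}\varepsilon^{1-2H\beta}$ without commenting on the $\sup$/$\E$ interchange, so your discussion of that point is, if anything, more careful than the original.
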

\begin{proof}
By Theorem \ref{t1} and the Chebyshev inequality, for any given number $\delta_{2}>0$, we can obtain
\begin{equation*}
\mathbb{P}\left( \sup_{K\varepsilon^{1-\beta}\leq t \leq T}\left|Y^{\varepsilon}_{t}-\overline{Y}_{t}\right|> \delta_{2} \right)\leq
\frac{1}{\delta_{2}^2}\E\left( \sup_{K\varepsilon^{1-\beta}\leq t \leq T}\left|Y^{\varepsilon}_{t}-\overline{Y}_{t}\right|^{2} \right)\leq\frac{C_{4}\varepsilon^{1-2H\beta}}{\delta_2^2}.
\end{equation*}
Let $\varepsilon\to 0$ and the required result follows.
\end{proof}

\begin{remark}
Corollary \ref{c1} means the convergence in probability between the  original solution $\left(Y^{\varepsilon}_{t}, Z^{\varepsilon}_{1,t}, Z^{\varepsilon}_{2,t}\right)$  
 and the averaged solution $\left(\overline{Y}_{t}, \overline{Z}_{1,t}, \overline{Z}_{2,t}\right)$.
\end{remark}

\end{document}